\documentclass{article}
\usepackage[a4paper, margin=3cm]{geometry}
\usepackage{graphicx, url, booktabs, tabularx, array, float}

\usepackage{fancyhdr}

\fancypagestyle{firstpage}{%
	\fancyhf{}
	
}

\fancypagestyle{subsequentpages}{%

}

\AtBeginDocument{\thispagestyle{firstpage}}

\title{A model of anisotropic branched optimal transport}
\author{Martina Bellettini \& Andrea Marchese}

\usepackage[utf8]{inputenc}
\usepackage[english]{babel}
\usepackage{amsmath,csquotes}
\usepackage[colorlinks=true, allcolors=blue]{hyperref}
\numberwithin{equation}{section}

\usepackage{amsthm}
\usepackage{mathtools}
\usepackage{verbatim}
\usepackage{amssymb}

\usepackage{filecontents}
\usepackage{bbm,tikz}
\usepackage{stmaryrd} 

\usepackage[style=numeric,backend=biber,maxbibnames=10]{biblatex}
\usepackage{esint}

\usepackage[toc,page]{appendix}
\theoremstyle{plain}
\newtheorem{theorem}{Theorem}[section]
\newtheorem{lemma}[theorem]{Lemma}
\newtheorem*{lemma*}{Lemma}
\newtheorem*{theorem*}{Theorem}
\newtheorem{proposition}[theorem]{Proposition}

\newtheorem{defin}[theorem]{Definition}

\theoremstyle{definition}
\newtheorem{rem}[theorem]{Remark}

\newtheorem{es}[theorem]{Example}

\theoremstyle{definition}
\newtheorem*{notation*}{Notation}
\newcommand{\graffas}{\biggl\{}
\newcommand{\graffad}{\biggr\}}

\newcommand{\rgrande}{\mathbb{R}}

\newcommand{\ngrande}{\mathbb{N}}
\newcommand{\zgrande}{\mathbb{Z}}

\newcommand{\haus}{\mathcal{H}}
\newcommand{\leb}{\mathcal{L}}

\newcommand{\sfera}{\mathbb{S}}
\newcommand{\corrett}{\mathbf{R}_k(\rgrande^n)}
\newcommand{\correttvar}[2]{\mathbf{R}_{#1}(\rgrande^{#2})}
\newcommand{\corrnorm}{\mathbf{N}_k(\rgrande^n)}
\newcommand{\corrvar}[2]{\mathcal{D}_{#1}(\rgrande^{#2})}
\newcommand{\corrnormvar}[2]{\mathbf{N}_{#1}(\rgrande^{#2})}
\newcommand{\corrpol}{\mathbf{P}_k(\rgrande^n)}
\newcommand{\normaflat}{\mathbb{F}}
\newcommand{\corr}{\mathcal{D}_k(\rgrande^n)}

\newcommand{\supp}{{\rm supp}}

\newcommand{\conv}{{\rm conv}}

\newcommand{\massa}{\mathbb{M}}
\newcommand{\massaanisotropa}{\mathbb{M}_{H,\sigma_k}}
\newcommand{\massaanisotropauno}{\mathbb{M}_{H,\sigma_1}}
\newcommand{\grass}{{\rm G}}
\newcommand{\grasskn}{\grass(k,n)}
\newcommand{\grassnmenokn}{\grass(n-k,n)}

\newcommand{\boreliani}{\mathcal{B}}
\newcommand{\interno}{{\rm Int}}
\newcommand{\vettgen}[2]{\Lambda_{#1}(\rgrande^{#2})}
\newcommand{\rilassato}{\Phi_{H, \sigma_k}}

\newcommand{\mrestr}{\mathop{\hbox{\vrule height 7pt width 0.5pt depth 0pt
\vrule height 0.5pt width 6pt depth 0pt}}\nolimits}
\DeclarePairedDelimiter{\norma}{\lVert}{\rVert} 
\DeclarePairedDelimiter{\valass}{\lvert}{\rvert} 
\DeclarePairedDelimiter\abs{\lvert}{\rvert}%
\makeatletter
\let\oldabs\abs
\def\abs{\@ifstar{\oldabs}{\oldabs*}}

\newcommand\restr[2]{{%
  \left.\kern-\nulldelimiterspace
  #1
  \vphantom{\big|}
  \right|_{#2}
  }}

\newcommand{\prodscal}[2]{\langle{#1} , {#2} \rangle}

\newcommand{\slice}[3]{\langle{#1} , {#2}, {#3} \rangle}

\newcommand{\indice}{\ell}
\newcommand{\matrice}{\mathscr{M}}
\usepackage{mathrsfs}

\begin{document}
\date{}
\maketitle

\begin{abstract}
    We propose a new anisotropic optimal transport model based on the theory of currents, where the anisotropic cost function splits as the product of a factor depending only on the spatial direction and a factor depending only on the multiplicity of the current. We prove that the planar transport problem admits a minimizer. In arbitrary dimension, we show that a minimizer exists provided that the ambient space endowed with the anisotropic norm is hypermetric.
\end{abstract}

\section{Introduction}
The aim of this paper is to prove the existence of minimizers for an anisotropic branched transport problem. Branched transport problems model situations in which transporting mass together along a common route is cheaper than transporting it separately. Classical examples come from communication networks, irrigation patterns, the cardiovascular or nervous system, the veins of a leaf and the branches or roots of a tree. In these models, the cost depends on the whole transport network, rather than only on the distance between source and target. For this reason, the relevant competitors are not maps or plans, but one-dimensional objects representing the union of the trajectories of the moving particles, with their orientation, carrying a multiplicity which represents the intensity of the flow.

The formulation in terms of rectifiable currents was introduced by Xia in \cite{xia03}, generalizing Gilbert's discrete model, where the source and target are atomic measures connected by a weighted oriented graph (see \cite{gilbert}). A Lagrangian formulation was introduced in \cite{msm}; the equivalence between the main formulations is discussed in the monograph \cite{bcm}, see also \cite{MR3729380}. In both formulations, the cost is designed to favor the aggregation of mass along common paths, and this feature is typically encoded through a suitable function of the multiplicity.

Several aspects of branched transport and related models have been studied in the literature. Elementary and structural properties of optimal irrigation patterns are investigated in \cite{DevSol07, bcm08}, while interior and boundary regularity results are established in \cite{xia04, xia11, ms10, bs14}; examples exhibiting highly nontrivial branching behavior can be found in \cite{AnMa2, Goldman2017SelfsimilarMO}. Stability, well-posedness and uniqueness issues are addressed in \cite{bf08, cdrm18, cdmCPAM, CaldMS23}. Alternative formulations and variational viewpoints are developed in \cite{bbs06, bbs11, bw16}. Related extensions and variants include multi-material transport problems \cite{mmt19, mmst21}, transport problems with capacity constraints \cite{XiaSun25}, and shape-optimization-type questions \cite{PegSantXia19, bs18}. Finally, further recent directions concern convex relaxations and formulas for the $h$-mass \cite{LoScWirth25} and dimensional estimates \cite{DePGolRuf23, CosGolKos24}.

In several applications it is natural to allow the cost to depend not only on the multiplicity, but also on the orientation of the moving particles, because some spatial directions might be preferable to others. This motivates the anisotropic model considered here.

Let us describe the framework more precisely. Let $\mu^-, \mu^+$ be positive finite measures in $\rgrande^n$ such that
$\mu^- (\rgrande^n) = \mu^+ (\rgrande^n).$
We want to transport the \textit{source} $\mu^-$ to the \textit{target} $\mu^+$ by means of a \textit{transport path}, namely a rectifiable $1$-current $R$ in $\rgrande^n$ with prescribed boundary
$
\partial R = \mu^+ - \mu^-.
$
We consider the class of competitors
$$
\mathcal{C} \vcentcolon = \{ R \in \correttvar{1}{n} : \partial R = \mu^+ - \mu^- \}
\text{.}
$$
Recall that a rectifiable $1$-current $R$ in $\rgrande^n$ can be written as $R=\llbracket E,\tau,\theta \rrbracket$, where $E\subset \rgrande^n$ is a $1$-rectifiable set, $\tau$ is a unit tangent vector field defined for $\haus^1$-a.e. $x\in E$ and $\theta:E\to \rgrande$ is $\haus^1 \mrestr E$-integrable, so that
$$
\prodscal{R}{\omega} = \int_{E} {\prodscal{\omega(x)}{\tau(x)} \theta(x)} \,d \haus^1(x),
$$
for every smooth and compactly supported $1$-form $\omega$ on $\rgrande^n$.

The cost functional considered in this paper is the anisotropic $H$-mass. We first introduce it for general rectifiable $k$-currents, although its role in the transport problem is the case $k=1$.

\begin{defin}
    We call $H: \rgrande \to [ 0,+ \infty)$ a \textit{branching function} if it is even, lower semicontinuous, subadditive and $H(0)=0$.
\end{defin}
\begin{defin}
    A \textit{$k$-anisotropy in $\rgrande^n$} is a continuous function $\sigma_k : \grasskn \to \rgrande^+ \setminus \{ 0 \}$. On the Grassmannian $\grasskn$ we consider the topology induced by the distance  
    $$d(E,F)\vcentcolon=
    \sup\{\, |(p_E-p_F)(x)| : x\in \rgrande^n,\ |x|\le 1\,\},
    \qquad E,F\in\grasskn,$$
    and $p_E,p_F$ denote the orthogonal projections onto $E$ and $F$.
\end{defin}
Continuous strictly positive functions on $\grasskn$ are known in the literature as \textit{scaling functions} (see for example \cite{MR1462734}).
\begin{defin}\label{defnormaanisotropa}
     Let $\sigma_k$ be a $k$-anisotropy in $\rgrande^n$. For any nonzero simple $k$-vector $v \in \vettgen{k}{n}$, we set $\hat{v} \vcentcolon = \frac{v}{\norma{v}_{\infty}}$ and we define
    $$
    \norma{v}_{\sigma_k} \vcentcolon =  \norma{v}_{\infty} \sigma_k (L_{\hat{v}})
    \text{,}
    $$
    where $L_w$ denotes the vector subspace spanned by the $k$-vector $w$. We call $\norma{\cdot}_{\sigma_k}$ a \textit{$k$-anisotropic norm}.
\end{defin}

\begin{defin}\label{defconvex1anis}
Let $\sigma_1$ be a $1$-anisotropy in $\rgrande^n$. Since every $1$-vector is simple, Definition~\ref{defnormaanisotropa} induces a positively $1$-homogeneous function on $\rgrande^n$, defined by
$$
G_{\sigma_1}(0)\vcentcolon=0,
\qquad
G_{\sigma_1}(v)\vcentcolon=\norma{v}_{\sigma_1}
\quad\text{for }v\neq 0.
$$
We say that $\sigma_1$ is \emph{convex} if $G_{\sigma_1}$ is a norm on $\rgrande^n$. Equivalently, $\sigma_1$ is convex if the set
$$
C_{\sigma_1}\vcentcolon=\{u\in \rgrande^n:\norma{u}_{\sigma_1}\le 1\}
$$
is convex.
\end{defin}

\begin{defin}\label{defmassaanisotropa}
Let $\sigma_k$ be an anisotropy in $\rgrande^n$ and $H $ be a branching function. For any $R = \llbracket E , \tau, \theta \rrbracket \in \corrett$, we define the anisotropic $H$-mass of $R$ as
$$
\massaanisotropa (R) \vcentcolon = \int_{E} {H(\theta(x)) \norma{\tau(x)}_{\sigma_k}} \,d \haus^k(x)
\text{.}
$$
\end{defin}
The subadditivity of $H$ guarantees that joint transportation may be more convenient than separate transportation. In the case $k=1$, the anisotropy $\sigma_1$ models the fact that different spatial directions may have different costs.

Our main goal is to study the \emph{anisotropic branched optimal transport} problem, abbreviated as \textnormal{(ABOT)}:
\begin{equation}\label{abotunon}\tag{$\text{ABOT}$}
    \inf \{ \massaanisotropauno(R) : R \in \mathcal{C} \}
    \text{.}
\end{equation}
We also consider the sequential lower semicontinuous envelope of $\massaanisotropa$ on $\correttvar{k}{n}$, with respect to the \emph{flat norm} (see Section~\ref{sec:notation}), starting from polyhedral $k$-currents. By a polyhedral $k$-current we mean the rectifiable current induced by a finite sum of oriented $k$-simplices with real multiplicities. We denote this class of currents by $\mathbf{P}_k(\rgrande^n)$.

\begin{defin}\label{defrilassato}
    Let $\sigma_k$ be a $k$-anisotropy on $\rgrande^n$. We define the sequential lower semicontinuous envelope of ${\massaanisotropa}$ on $\correttvar{k}{n}$ starting from $\corrpol$ with respect to the flat norm as
    \begin{equation}\label{eqdefrilassato}
        \rilassato (R) \vcentcolon = \inf \graffas
        \liminf_{i \to +\infty} \massaanisotropa (P_i): P_i \in \corrpol, \normaflat(R - P_i) \to 0
        \graffad
        \qquad
        \forall R \in \corrett
        \text{.}
    \end{equation}
\end{defin}

We refer to Definition~\ref{defformulaigperanisotropia} for the notion of integral geometric representation of a $k$-anisotropy. Our first main result identifies $\massaanisotropa$ with its relaxation. 

\begin{theorem}\label{thuguaglianza}
    Let $\sigma_k$ be a $k$-anisotropy in $\rgrande^n$ that admits an integral geometric representation and let $H: \rgrande \to [ 0,+ \infty)$ be a branching function. Then for every  $R \in \corrett$
    \begin{equation*}
        \massaanisotropa(R) = \rilassato (R)
        \text{.}
    \end{equation*}
    In particular, $\massaanisotropa$ is sequentially flat lower semicontinuous on $\corrett$.
\end{theorem}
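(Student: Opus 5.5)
We prove the two inequalities $\rilassato\le\massaanisotropa$ and $\massaanisotropa\le\rilassato$ separately. The final assertion then follows at once: $\rilassato$ is sequentially flat lower semicontinuous on $\corrett$ by a standard diagonal argument. Given $R_j\to R$ in flat norm, we pick for each $j$ a polyhedral $P_j$ with $\normaflat(R_j-P_j)<1/j$ and $\massaanisotropa(P_j)\le\rilassato(R_j)+1/j$. Then $P_j\to R$ in flat norm, so $\rilassato(R)\le\liminf_j \rilassato(R_j)$.

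\emph{Upper bound $\rilassato\le\massaanisotropa$.} We may assume $\massaanisotropa(R)<\infty$. We follow the isotropic strategy of White and Colombo--De Rosa--Marchese--Stuvard for the $H$-mass, approximating $R=\llbracket E,\tau,\theta\rrbracket$ by polyhedral currents with convergence of $\massaanisotropa$. First we truncate: we cover $E$, up to a set of small $\haus^k$ measure, by finitely many disjoint compact pieces $K_j$ of $C^1$ graphs over planes $\pi_j$. On each piece $\tau$ is close to $\vec\pi_j$, and $\theta$ is close to a constant $\theta_j$ in the sense that $\int |H(\theta)-H(\theta_j)|$ is small; here we use lower semicontinuity of $H$ and Lusin's theorem, arranging $H(\theta_j)\le H(\theta)+\varepsilon$. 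Continuity of $\sigma_k$ on $\grasskn$ makes $\norma{\tau}_{\sigma_k}$ close to $\sigma_k(\pi_j)$ on $K_j$. Each piece $K_j$ is then approximated in flat norm by a polyhedral chain lying in $\pi_j$ (the projection), with multiplicity $\theta_j$ and comparable area. The errors (the boundaries created by cutting, and the small leftover set) are handled by the Federer--Fleming deformation theorem. This is where subadditivity enters: whenever polyhedral pieces overlap, $H(\theta_1+\theta_2)\le H(\theta_1)+H(\theta_2)$ guarantees that the cost does not increase. The deformation step has to be controlled in the anisotropic cost. Since $\sigma_k$ is continuous and positive on the compact set $\grasskn$, we have $c\le\sigma_k\le C$, so $\massaanisotropa$ is comparable to the isotropic $H$-mass, and small isotropic errors are small anisotropic errors. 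Beyond this comparability, this step only uses continuity of $\sigma_k$.

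\emph{Lower bound $\massaanisotropa\le\rilassato$.} This is the main step, and it is where the integral geometric representation (Definition~\ref{defformulaigperanisotropia}) is used. We assume that representation expresses $\norma{v}_{\sigma_k}$, for simple unit $v$, as $\int |\prodscal{v}{\cdots}|\,d\nu$ over a family of $(n-k)$-planes/projections $p$ with a positive measure $\nu$, i.e.\ as an average of $|\det(p|_{L_v})|$. Slicing/projecting then turns the cost into a superposition of isotropic counts. Writing $\massaanisotropa(R)=\int \mathcal{G}_p(R)\,d\nu(p)$, the integrand is $\mathcal{G}_p(R)=\int_{\rgrande^{n-k}}\sum_{x\in p^{-1}(y)\cap E}H(\theta(x))\,dy$ in the codimension picture. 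Indeed, by the area formula $\int_E H(\theta)\,|J p|_E|\,d\haus^k$ equals the integral over $y$ of the 0-dimensional $H$-masses of the slices $\slice{R}{p}{y}$. For 0-dimensional currents $\sum_i\theta_i\delta_{x_i}$, the functional $\sum_i H(\theta_i)$ is flat lower semicontinuous; this is the classical fact that uses lower semicontinuity and subadditivity of $H$, since merging atoms can only decrease cost in the limit. If $P_i\to R$ in flat norm with $\sup_i\massaanisotropa(P_i)<\infty$, then up to a subsequence the slices $\slice{P_i}{p}{y}\to\slice{R}{p}{y}$ in flat norm for $\nu\otimes\leb^{n-k}$-a.e.\ $(p,y)$. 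This follows from $\int\normaflat(\slice{T}{p}{y})\,dy\le C\,\normaflat(T)$, applied with a diagonal subsequence. Fatou's lemma, applied first in $y$ and then in $p$, gives $\massaanisotropa(R)\le\liminf_i\massaanisotropa(P_i)$.

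The main obstacle is the lower bound. Two points need care there: obtaining a.e.\ slice convergence uniformly over the parameter space of $\nu$, which requires a Fubini-type estimate of $\normaflat$ of slices integrated against $\nu$, and checking that the slice formula reproduces exactly $H(\theta)\norma{\tau}_{\sigma_k}$ with the normalization $\norma{\cdot}_\infty$ of Definition~\ref{defnormaanisotropa}.
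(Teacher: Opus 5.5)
Your lower bound $\massaanisotropa\le\rilassato$ is correct and is essentially the paper's argument: the slicing formula from the integral geometric representation (Lemma~\ref{lemmaigpermasssaanisotropa}), a.e.\ flat convergence of slices after integrating the slice estimate against the finite measure, lower semicontinuity of $\massa_H$ on $0$-chains, and Fatou. Your diagonal argument for the final lower semicontinuity claim is also fine. One bookkeeping slip: $0$-dimensional slices of a $k$-current come from projections onto $k$-planes. So $y$ ranges over a $k$-dimensional space and the relevant measure is $\varphi_{n-k}^{\perp}\otimes\haus^k$, not $\leb^{n-k}$.

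The genuine gap is in the upper bound $\rilassato\le\massaanisotropa$, at the step where an arbitrary compact piece $K_j$ of a $C^1$ graph is replaced by its projection onto $\pi_j$. In general, the flat distance between $\llbracket K_j,\tau,\theta_j\rrbracket$ and its projection is not small compared with its mass.
\begin{itemize}
\item The homotopy estimate bounds this distance by the offset from $\pi_j$ times the mass \emph{plus the boundary mass} of the piece, and $\partial\llbracket K_j\rrbracket$ can have huge or even infinite mass.
\item This is not just a weakness of the estimate. For $k=1$, a curved graph over many intervals shorter than the offset is at flat distance comparable to its length from its projection; this can be checked by duality.
\end{itemize}

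The Federer--Fleming deformation theorem does not repair this. It controls mass only up to a dimensional constant (and needs finite boundary mass), which is incompatible with a $(1+\varepsilon)$ bound on the cost. For leftover pieces it is unnecessary: a piece of small mass can simply be discarded, since $\normaflat\le\massa$ and discarding only lowers $\massaanisotropa$. The boundaries created by cutting are irrelevant, because $\rilassato$ involves flat convergence only.

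What is missing is the following:
\begin{itemize}
\item Take the pieces to be small balls $B(x,r)$ centred at points where $E$ has density one and an approximate tangent plane.
\item Compare $R\mrestr B(x,r)$ with the \emph{full} flat disk $S_{x,r}$, which gives $\normaflat(R\mrestr B(x,r)-S_{x,r})=o(\massa(R\mrestr B(x,r)))$ (Lemma~\ref{lemmadensità}).
\item Use Lebesgue points of $H(\theta)\norma{\tau}_{\sigma_k}$ to control the cost of the disks; this replaces your Lusin step.
\item Select disjoint balls by Vitali--Besicovitch so that they carry all but $\eta$ of the \emph{mass}. Truncate in mass rather than in $\haus^k$-measure, since $\haus^k(E)$ may be infinite.
\end{itemize}
This is the paper's Proposition~\ref{secondainclusione}. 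Since the disks are disjoint, no overlaps arise, and subadditivity of $H$ is not needed in this direction.
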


The hypothesis that $H$ is a branching function is necessary for the functional $\massaanisotropa$ to be well-defined and lower semicontinuous on $\corrpol$; for the details, see \cite{cdrms17}. Our second main result concerns the existence of solutions to the problem \eqref{abotunon}. The non-emptiness of the class of competitors and the necessity of assuming the finiteness of the infimum are discussed in Remark \ref{remsullinf}.

\begin{theorem}\label{thtoa}
    Let $n \geq 2$. Let $\sigma_1$ be a convex $1$-anisotropy in $\rgrande^{n}$ and let $H$ be a branching function that is non-decreasing on $\rgrande^+$ and satisfies $\lim_{y \to 0^+} \frac{H(y)}{y} = + \infty$. Assume moreover that
    $$
    \inf \{ \massaanisotropauno(R) : R \in \mathcal{C} \} < +\infty.
    $$
    Then:
    \begin{enumerate}
        \item If $n=2$, problem \eqref{abotunon} admits a solution;
        \item If $n \geq 3$ and $(\rgrande^n, \norma{\cdot}_{\sigma_1})$ is hypermetric in the sense of Definition~\ref{defspazioipermetrico}, problem \eqref{abotunon} admits a solution.
    \end{enumerate}
\end{theorem}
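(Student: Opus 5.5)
We argue by the direct method in the calculus of variations, using Theorem~\ref{thuguaglianza} for lower semicontinuity. The plan is to show that, under either hypothesis (1) or (2), the $1$-anisotropy $\sigma_1$ admits an integral geometric representation in the sense of Definition~\ref{defformulaigperanisotropia}. Once this holds, Theorem~\ref{thuguaglianza} gives that $\massaanisotropauno$ is sequentially flat lower semicontinuous on $\correttvar{1}{n}$, and it remains to obtain compactness of a minimizing sequence.

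\textbf{Step 1: integral geometric representation.} For $n=2$, $G_{\sigma_1}$ is a norm on $\rgrande^2$ because $\sigma_1$ is convex. Every planar norm is a zonoid norm, i.e.\ its unit ball is dual to a zonoid, so
$$\norma{v}_{\sigma_1}=\int_{\sfera^{1}}|\prodscal{v}{\xi}|\,d\nu(\xi)$$
for a positive even measure $\nu$ (Blaschke--Crofton). This is exactly a Crofton-type representation by hyperplane (line) projections or slicing, which I expect to match Definition~\ref{defformulaigperanisotropia}. For $n\ge 3$, this representation fails for general norms. However, hypermetric normed spaces are precisely those that embed isometrically into $L^1$ (Witsenhausen / Assouad / Deza--Laurent), and $L^1$-embeddable finite-dimensional normed spaces are exactly those whose norm is a zonoid norm. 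So the same formula holds with $\nu$ on $\sfera^{n-1}$. This identification is where the hypermetric assumption enters, and I expect it to be the main obstacle. One must cite or prove the equivalence \emph{hypermetric $\Leftrightarrow$ $L^1$-embeddable $\Leftrightarrow$ zonoid dual ball}, for instance via Bolker's characterization or by approximating $\nu$ by finitely supported measures using the hypermetric inequalities. One must also check that the representation is of the precise form required by Theorem~\ref{thuguaglianza}.

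\textbf{Step 2: compactness.} Take a minimizing sequence $R_j\in\mathcal{C}$ with $\massaanisotropauno(R_j)\le C$. Since $\sigma_1$ is continuous and positive on the compact $\grass(1,n)$, we have $c\le\sigma_1\le c^{-1}$. Hence $\massaanisotropauno$ is comparable to the isotropic $H$-mass $\mathbb{M}_H$. The condition $H(y)/y\to\infty$ as $y\to0^+$, together with monotonicity and subadditivity, gives $H(y)\ge c'\min(y,1)$-type bounds. These yield a uniform bound on $\massa(R_j)$ after reducing to acyclic competitors: removing cycles does not increase the cost, since $H$ is nondecreasing. The reduction uses the good decomposition of $R_j$ into curves, with multiplicities bounded by $\mu^\pm(\rgrande^n)$.

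Localizing mass near the supports is a further issue. If $\mu^\pm$ are not compactly supported, I would use the standard truncation argument of \cite{bcm,cdrms17}. Otherwise, I would project onto a large ball using a $1$-Lipschitz retraction, which costs at most a constant factor for the anisotropic cost. Since $\partial R_j$ is fixed, the sequence has uniformly bounded normal mass, and the Federer--Fleming compactness theorem for normal currents gives a flat-convergent subsequence $R_j\to R$, with $\partial R=\mu^+-\mu^-$.

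\textbf{Step 3: rectifiability of the limit and conclusion.} Rectifiability of $R$ follows from the closure theorem for currents with finite $H$-mass. The superlinearity $H(y)/y\to\infty$ at $0$ forces the limit to have no diffuse part, as in \cite{cdrms17} (lower semicontinuity of $\mathbb{M}_H$ under flat convergence, plus a rectifiability criterion). Hence $R\in\mathcal{C}$. By Theorem~\ref{thuguaglianza},
$$\massaanisotropauno(R)\le\liminf_j\massaanisotropauno(R_j)=\inf_{\mathcal{C}}\massaanisotropauno,$$
so $R$ is a solution of \eqref{abotunon}.
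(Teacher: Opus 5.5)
Your Steps 1 and 3, and the mass bound in Step 2, agree in substance with the paper.

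\begin{itemize}
\item The zonoid formula $\|v\|_{\sigma_1}=\int|\langle v,\xi\rangle|\,d\nu(\xi)$ is exactly Definition~\ref{defformulaigperanisotropia} for $k=1$, since $[\mathrm{span}\,v,\xi^\perp]=|\langle v,\xi\rangle|$ for unit $v,\xi$.
\item Your bound $H(y)\ge\delta\min\{y,1\}$, combined with $|\theta|\le\mu^+(\rgrande^n)$ for acyclic competitors, gives the uniform mass bound.
\end{itemize}

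\textbf{The gap: localization of the supports.} You want global flat convergence, so that Federer--Fleming and then Theorem~\ref{thuguaglianza} apply. To get it you try to force the supports of $R_j$ into a fixed ball, and neither of your two ways works.
\begin{itemize}
\item Pushing forward by a Euclidean $1$-Lipschitz retraction is in general not nonexpansive for $\|\cdot\|_{\sigma_1}$. Its differential projects onto tangent directions and so rotates vectors. For any non-Euclidean norm, some Euclidean orthogonal projection onto a line strictly increases the norm of some vectors. As you note yourself, you then control the new cost only up to a factor of order $\max\sigma_1/\min\sigma_1>1$. The modified sequence is no longer minimizing, and lower semicontinuity only gives $\massaanisotropauno(R)\le C\inf_{\mathcal C}\massaanisotropauno$. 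That is not existence.
\item For non-compactly supported $\mu^\pm$, the ``standard truncation argument'' is left unspecified, and it faces the same difficulty of not increasing the cost.
\end{itemize}

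\textbf{The missing idea: the minimizing sequence need not be modified at all.} The bound $\sup_j\bigl(\massa(R_j)+\massa(\partial R_j)\bigr)<\infty$ alone suffices.
\begin{enumerate}
\item Slice with $u(x)=|x|$. Fatou gives $\liminf_j\massa(\langle R_j,u,r\rangle)<\infty$ for a.e.\ $r$, hence uniformly bounded boundary masses of $R_j\mrestr B_r$ along subsequences.
\item Apply Federer--Fleming on $\overline{B_r}$, together with weak-$*$ convergence $R_j\rightharpoonup R$ to identify the limit. This yields $\mathbb F\bigl((R_j-R)\mrestr B_r\bigr)\to 0$ for a.e.\ $r>0$ (Lemma~\ref{lemmacompattezzalocale} in the paper). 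The weak-$*$ convergence also gives $\partial R=\mu^+-\mu^-$.
\item Apply the $H$-mass rectifiability criterion and Theorem~\ref{thuguaglianza} ball by ball:
\[
\massaanisotropauno(R\mrestr B_r)\le\liminf_j\massaanisotropauno(R_j\mrestr B_r)\le\liminf_j\massaanisotropauno(R_j).
\]
\item Let $r\to\infty$ along such radii and use monotone convergence. This gives $\massaanisotropauno(R)\le\inf_{\mathcal C}\massaanisotropauno$.
\end{enumerate}
This argument makes no support assumption on $\mu^\pm$ and never modifies the sequence.
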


If $k \in \{ 2, \ldots, n-1 \}$, a necessary and sufficient condition to determine whether $\sigma_k$ admits an integral geometric representation is not yet known; some partial results are stated in Section \ref{sec:igperkgenerico}. Moreover, our proof of the existence result relies on an $L^{\infty}$ bound on the multiplicity of acyclic currents, that is available only in the case $k=1$.

\subsection{Stucture of the paper}

Section~\ref{sec:notation} fixes notation and recalls the background on anisotropic norms and hypermetric spaces used later. Section~\ref{sec:igrep} discusses integral geometric representations, with particular emphasis on the planar case. In Section~\ref{sec:lscrel} we prove the lower semicontinuity and relaxation result of Theorem~\ref{thuguaglianza}. Section~\ref{sec:min} is devoted to compactness and existence of minimizers. Some further remarks on the case $k\in\{2,\ldots,n-1\}$ are collected in Section~\ref{sec:igperkgenerico}.

\subsection*{Acknowledgments}

This work was supported by the Italian Ministry of University and Research (MUR) through the FIS 2 project \textit{SingMeas:"Singular Structures in the Geometry of Measures: decompositions, rigidity and rectifiability"}, project code FIS-2023-0272ff5 (CUP: E53C25001800001). We are grateful to Antonio De Rosa for useful discussions.

\section{Notation and preliminaries}\label{sec:notation}
We collect here the notation used throughout the paper.

$\leb^n$ and $\haus^k$ are the $n$-dimensional Lebesgue and $k$-dimensional Hausdorff measures on $\rgrande^n$, respectively. We denote by $\omega_k$ the $\haus^k$ measure of the unit ball in $\rgrande^k$. The Borel $\sigma$-algebra of a topological space $Z$ is indicated by $\boreliani(Z)$. If $\mu$ and $\nu$ are measures on $X$ and $Y$, respectively, we denote by $\mu \otimes \nu$ the product measure on $X \times Y$. We denote by $\valass{\mu}$ the total variation of the measure $\mu$ as in \cite[Def. 1.4]{MR1857292}. For $x\in \rgrande^n$ and $r>0$, we denote by $B(x,r)$ the open Euclidean ball centered at $x$ with radius $r$ and we set
$$
B_R \vcentcolon= B(0,R)
\qquad \forall R>0.
$$

We write $\grasskn$, for $k \in \{0, \ldots, n \}$, for the set of all unoriented linear subspaces of $\rgrande^n$ of dimension $k$. For any $L \in \grasskn$, we denote by $p_L : \rgrande^n \to L$ the orthogonal projection onto $L$ and by $L^{\perp}$ the orthogonal complement of $L$. Given a positive measure $\varphi_{n-k}$ on $\grassnmenokn$, with $n \geq 2$ and $k \in \{ 1, \ldots, n-1 \}$, for any $A \in \boreliani(\grasskn)$ we define
$$
\varphi_{n-k}^{\perp} (A) \vcentcolon = \varphi_{n-k} ( \{ L \in \grassnmenokn : L^{\perp} \in A \} )
\text{,}
$$
that is, the push-forward of $\varphi_{n-k}$ under the map $L \mapsto L^{\perp}$.

For any $x_1, \ldots, x_d \in \rgrande^n$, with $d >0$, $\conv \{ x_1, \ldots, x_d\}$ is the closed convex hull of $\{ x_1, \ldots, x_d\}$. We denote by $\Lambda_{k}(\rgrande^n)$ the linear space of $k$-vectors in $\rgrande^n$, endowed with the Euclidean mass norm $\norma{\cdot}_{\infty}$. We say that $u \in \vettgen{k}{n}$ is unit if $\norma{u}_{\infty}=1$. For a simple and unit $u \in \vettgen{k}{n}$ we denote by $L_{u} \in \grasskn$ the unoriented linear $k$-subspace associated with $u$.

We denote by $\corr$ the space of $k$-dimensional currents in $\rgrande^n$, that is continuous and linear functionals on the space $\mathcal{D}^k(\rgrande^n)$ of smooth and compactly supported differential $k$-forms and by $\corrpol$, $\corrett$ and $\corrnorm$ the subclasses of polyhedral, rectifiable and normal currents, respectively (see for example \cite{federer}). For any $k \geq 1$ and $T \in \corr$, we denote by $\partial T \in \corrvar{k-1}{n}$ the boundary of $T$, that is, the current such that $\langle\partial T,\phi\rangle=\langle T,d\phi\rangle$ and we say that $T \in \corrvar{k}{n}$ is  a cycle if $\partial T =0$. We write $\massa(T)$ for the mass of $T \in \corr$ (\cite[Eq. 6.9]{MR756417}). We say that $\Tilde{T} \in \corrvar{k}{n}$ is a subcurrent of $T\in \corrvar{k}{n}$ if 
$$\massa(T) = \massa(\Tilde{T}) + \massa(T- \Tilde{T}).$$ A current $T \in \corrvar{k}{n}$ is said to be acyclic if the set of all subcurrents of $T$ does not contain any nontrivial cycle. For $T \in \corr$, we denote by $\normaflat(T)$ the flat norm of $T$, defined as
$$
\mathbb{F}(T) \vcentcolon=
\inf \left\{ \mathbb{M}(R) + \mathbb{M}(S) : T = R + \partial S,\ R \in \corr,\ S \in \corrvar{k+1}{n} \right\}.
$$
\begin{defin}\label{deflocalflat}
We say that a sequence $(T_i)_i \subset \corrvar{k}{n}$ converges to $T \in \corrvar{k}{n}$ in the \emph{local flat topology} if
$$
\mathbb F\bigl((T_i-T)\mrestr B_R\bigr)\to 0
$$
for almost every $R>0$.
\end{defin}

If $T \in \corrett$, $f: \rgrande^n \to \rgrande^k$ is a Lipschitz function, $k \leq n$ and $y \in \rgrande^k$, then $\slice{T}{f}{y}$ denotes the $0$-dimensional slice of $T$ in $f^{-1}(y)$ as defined in \cite[Sect. 4.3]{federer}. In particular, if $R \in \corrett$, $L \in \grasskn$ and $y \in L\simeq\rgrande^k$, the slice $\slice{R}{p_L}{y}$ is a $0$-dimensional rectifiable current. From now on, we always assume $n \geq 2$.

\begin{defin}\label{defjacobiano}
    Let $E, F \in \grasskn$. We define
    $$
    J(E,F) \vcentcolon= \left|\det \matrice^{\mathcal{B}_{E}}_{\mathcal{B}_{F}} (p_{{F} \vert_{E}})\right|,
    $$
    where $\matrice^{\mathcal B_E}_{\mathcal B_F}(p_{F|_E})$ denotes the matrix representing the linear map
    $p_{F|_E}:E\to F$ with respect to any orthonormal bases $\mathcal B_E$ of $E$ and $\mathcal B_F$ of $F$.
    If $L\in \grassnmenokn$, we also write
    $$[E,L]\vcentcolon=J(E,L^\perp).$$
\end{defin}

\begin{rem}
The quantities $J(E,F)$ and $[E,L]$ do not depend on the choice of the orthonormal bases. When $n=2$ and $k=1$, if $l_1,l_2\in \grass(1,2)$, then
$$
J(l_1,l_2)=|\cos\alpha|,
\qquad
[l_1,l_2]=|\sin\alpha|,
$$
where $\alpha$ is any angle between the two unoriented lines.
\end{rem}
\begin{rem}\label{rem:J-volume-ratio}
The quantity $J(E,F)$ coincides with the volume-ratio
defined as follows. If $y_1,\ldots,y_k$ is a basis of $E$, $z_1,\ldots,z_{n-k}$ is a basis of $F^\perp$ and
$$
\operatorname{Par}(v_1,\ldots,v_m)\vcentcolon=
\left\{\sum_{i=1}^m t_i v_i : t_i\in[0,1]\right\}
$$
denotes the parallelepiped generated by $v_1,\ldots,v_m$, then $$
J(E,F)=
\frac{
\haus^n\bigl(\operatorname{Par}(y_1,\ldots,y_k,z_1,\ldots,z_{n-k})\bigr)
}{\haus^k\bigl(\operatorname{Par}(y_1,\ldots,y_k)\bigr)\,
\haus^{n-k}\bigl(\operatorname{Par}(z_1,\ldots,z_{n-k})\bigr)
}.
$$
This is a standard linear algebra result. Indeed, the volume of $\operatorname{Par}(y_1,\ldots,y_k,z_1,\ldots,z_{n-k})$ can be computed, after choosing orthonormal bases of $E$, $F$, and $F^\perp$, by an appropriate determinant. Dividing by the volumes of $\operatorname{Par}(y_1,\ldots,y_k)$ and $\operatorname{Par}(z_1,\ldots,z_{n-k})$, one obtains exactly the determinant of the matrix representing the projection $p_{F|_E}$, that is, $J(E,F)$.

\end{rem}

\begin{defin}\label{defformulaigperanisotropia}
    Let $k \in \{ 1, \ldots, n -1\}$. We say that a $k$-anisotropy $\sigma_k$ admits an integral geometric formula (or integral geometric representation) if there exists a finite positive measure $\varphi_{n-k}$ on $\grassnmenokn$ such that
    \begin{equation*}
        \sigma_k (E) = \int_{\grassnmenokn} {[E, L]} \, d \varphi_{n-k} (L)
    \end{equation*}
    for every $E \in \grasskn$.
\end{defin}

\begin{rem}\label{remigequivperanisotropia}
    By a change of variables, a $k$-anisotropy $\sigma_k$ admits an integral geometric representation if and only if there exists a measure $\varphi_{n-k}^{\perp}$ on $\grasskn$ such that
    \begin{equation*}
        \sigma_k (E) = \int_{\grasskn} J(E,L)\, d \varphi_{n-k}^{\perp} (L)
    \end{equation*}
    for every $E \in \grasskn$.
\end{rem}

\begin{rem}
In the case $k \in \{ 1, n-1 \}$, since all $k$-vectors are simple, the map $\norma{\cdot}_{\sigma_k}$ is defined on all of $\vettgen{k}{n}\cong \rgrande^n$, with the trivial extension to the zero vector. For $k=1$, convexity of $\sigma_1$ is understood in the sense of Definition~\ref{defconvex1anis}; in this case $\norma{\cdot}_{\sigma_1}$ is a norm on $\rgrande^n$. Conversely, any norm $G$ on $\rgrande^n$ induces a $1$-anisotropy in $\rgrande^n$ by setting $\sigma_1 (L_v) \vcentcolon = G (v)$ for every unit vector $v$. We remark that if $\sigma_k$, in this case a function on $\sfera^{n-1}$, admits an integral geometric representation, then it is a convex function, since it is an average of convex functions.
\end{rem}

\begin{defin}\label{defspazioipermetrico}
    A metric space $(M,d)$ is said to be hypermetric if for any $a \in \ngrande$, any choice of points $\{ P_1, P_2, \ldots, P_a \} \subset M$ and any choice of coefficients $\{ x_1, x_2, \ldots, x_a \} \subset \zgrande$ such that
    $\sum_{i=1}^{a} x_i = 1$, it holds
    $$
    \sum_{1 \leq i < j \leq a} x_i x_j d(P_i,P_j) \leq 0 \text{.}
    $$
\end{defin}
\begin{es}\label{exspaziipermetrici}
The following examples show that hypermetricity is automatic in dimension $1$ and $2$, while in dimension $n\geq 3$ it becomes a genuine restriction. In the case of normed spaces, hypermetricity is understood with respect to the metric induced by the norm.
\begin{enumerate}
    \item\label{exdimensione1o2}
    Any normed linear space of dimension $1$ or $2$ is hypermetric.

    \item
    For $n \geq 3$, the normed space $(\rgrande^n, \norma{\cdot}_p)$ is hypermetric if $1 \leq p \leq 2$.

    \item\label{exnonipermetrico}
    For $n \geq 3$, the normed space $(\rgrande^n, \norma{\cdot}_{\infty})$ is not hypermetric.
\end{enumerate}
All these facts can be found in \cite[Chapter 3]{MR405367}. 
\end{es}

\begin{theorem}\label{thigpernormainrn}
     Let $G$ be a norm on $\rgrande^n$. Then there exists an integral geometric representation of the $1$-anisotropy induced by $G$ if and only if the metric space $(\rgrande^n, d_G)$ is hypermetric, where $d_G$ is the metric induced by $G$.
\end{theorem}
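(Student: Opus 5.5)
This is the classical equivalence between hypermetric normed spaces and normed spaces that embed in $L^1$ (zonoid unit ball of the dual, Crofton-type formula). The plan is to pass through $L^1$-embeddability in both directions.

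First, rewrite the integral geometric representation for $k=1$ in Crofton form. For $k=1$ and a line $E=L_v$ with $|v|=1$, and $L\in\grass(n-1,n)$ with unit normal $\nu_L$, we have $[E,L]=J(E,L^\perp)=|v\cdot\nu_L|$. Hence $\sigma_1$ admits an integral geometric representation if and only if there is a finite positive measure $\psi$ on $\sfera^{n-1}$ (the pushforward of $\varphi_{n-1}$ under $L\mapsto\pm\nu_L$, symmetrized) with
$$G(v)=\int_{\sfera^{n-1}}|\langle v,\nu\rangle|\,d\psi(\nu)\qquad\forall v\in\rgrande^n,$$
by $1$-homogeneity. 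Equivalently, $(\rgrande^n,G)$ embeds linearly and isometrically in $L^1(\psi)$ via $v\mapsto \langle v,\cdot\rangle$.

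\emph{Representation $\Rightarrow$ hypermetric.} Given points $P_i$ and integers $x_i$ with $\sum x_i=1$, we get
$$\sum_{i<j}x_ix_jG(P_i-P_j)=\int\sum_{i<j}x_ix_j|t_i-t_j|\,d\psi(\nu),\qquad t_i=\langle P_i,\nu\rangle.$$
So it suffices to check hypermetricity of $(\rgrande,|\cdot|)$. Write $|s-t|=\int_{\rgrande}|\mathbbm 1_{\{s>r\}}-\mathbbm 1_{\{t>r\}}|\,dr$. For fixed $r$, the integrand is the cut semimetric $\delta_S(i,j)$ with $S=\{i:t_i>r\}$, and
$$\sum_{i<j}x_ix_j\delta_S(i,j)=\Big(\sum_{S}x_i\Big)\Big(\sum_{S^c}x_i\Big)=m(1-m)\le0,$$
since $m\in\zgrande$. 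Integrating over $r$ gives the claim.

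\emph{Hypermetric $\Rightarrow$ representation.} This is the main obstacle. I would proceed in three steps.

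1. Hypermetric inequalities imply the negative type inequalities $\sum_{i<j}b_ib_jd(P_i,P_j)\le0$ for integer $b$ with $\sum b_i=0$. This is the standard trick: add a point coinciding with another one, or use scaling.

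2. For a normed space, negative type of $d_G$ on all finite subsets implies $L^1$-embeddability. A finite-dimensional normed space is isometric to a subspace of $L^1$ if and only if $d_G$ is of negative type, by Schoenberg/Bretagnolle–Dacunha-Castelle–Krivine: $e^{-tG}$ is positive definite for every $t>0$, hence $G$ is a continuous negative definite function which is $1$-homogeneous, and by Lévy–Khintchine it is the Fourier symbol of a symmetric $1$-stable measure.

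3. A symmetric $1$-stable law in $\rgrande^n$ has a spectral measure $\psi$ on $\sfera^{n-1}$ with $G(v)=c\int|\langle v,\nu\rangle|\,d\psi(\nu)$. This yields the Crofton form above and hence the desired $\varphi_{n-1}$ via $\nu\mapsto\nu^\perp$.

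Alternatively, all of this can be cited directly from \cite[Chapter 3]{MR405367}, where it is shown that a finite-dimensional normed space is hypermetric if and only if it embeds in $L^1$, if and only if its dual unit ball is a zonoid, which is exactly the representation above. In the proof I would rely on that reference for step 2, which is the genuinely nontrivial analytic input, and verify only the reformulation and the easy direction by hand.
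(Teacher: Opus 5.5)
Your proposal is correct, but it takes a different and more self-contained route than the paper. The paper gives no argument of its own: it cites \cite[Rem.~4.1]{MR1462734} and only observes, via Remarks~\ref{rem:J-volume-ratio} and~\ref{remigequivperanisotropia}, that the notion of integral geometric representation used there agrees with Definition~\ref{defformulaigperanisotropia}.

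You instead do three things:
\begin{itemize}
\item You make the reduction explicit: $[L_v,L]=|\langle v,\nu_L\rangle|$, so a representation is exactly a Crofton formula $G(v)=\int_{\sfera^{n-1}}|\langle v,\nu\rangle|\,d\psi(\nu)$, i.e.\ an isometric embedding of $(\rgrande^n,G)$ into $L^1(\psi)$.
\item You prove representation $\Rightarrow$ hypermetric entirely by hand, through the cut decomposition of $|s-t|$ and the inequality $m(1-m)\le 0$ for $m\in\zgrande$. This is complete.
\item You obtain the converse from the classical chain: hypermetric $\Rightarrow$ negative type $\Rightarrow$ $e^{-G}$ positive definite (Schoenberg) $\Rightarrow$ $e^{-G}$ is the characteristic function of a symmetric $1$-stable law (Bochner, plus $e^{-tG(v)}=e^{-G(tv)}$) $\Rightarrow$ spectral representation. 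Here $1$-homogeneity kills the Gaussian part and forces the L\'evy measure to be $r^{-2}\,dr\otimes\lambda$. This chain is sound.
\end{itemize}

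Two small precisions. In Step~1, the version that actually works is the scaling one: apply the hypermetric inequality to $x=mb+e_1$, divide by $m^2$, let $m\to\infty$, then pass to real coefficients by homogeneity and continuity. Adding a coincident point alone leaves an uncontrolled linear term. Also, if you cite rather than sketch the hard direction, note that the paper uses \cite{MR405367} only for Example~\ref{exspaziipermetrici}. Check that it really contains the $L^1$/zonoid equivalence, or cite \cite{MR1462734} as the paper does.

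The trade-off is this. The paper's citation is one line but hides everything, including the reconciliation of definitions. Your argument shows why hypermetricity is the right condition ($L^1$ metrics are mixtures of cut metrics). As a by-product, it shows that for norms hypermetricity is equivalent to the a priori weaker negative-type condition. It also isolates the one genuinely deep input, the structure of stable laws, in the hard direction.
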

\begin{proof}
    See \cite[Rem. 4.1]{MR1462734}. Note that the definition of integral geometric representation used in \cite{MR1462734} differs from Definition \ref{defformulaigperanisotropia}, but the two definitions are equivalent, see the remarks \ref{rem:J-volume-ratio} and \ref{remigequivperanisotropia}.
\end{proof}

\section{Integral geometric representations}\label{sec:igrep}
\subsection{General facts}
Throughout this section, we fix $n \in \ngrande$, $n \geq 2$, and $k \in \ngrande$ with $0 < k \leq n-1$. The relevant case for the optimal transport problem is $k=1$; for completeness, we state the results for general $k$ when possible.

\begin{proposition}\label{propcollegamentoconmisuraleb}
    A $k$-anisotropy $\sigma_k$ in $\rgrande^n$ admits an integral geometric representation via a positive measure $\varphi_{n-k}$ on $\grassnmenokn$ if and only if for every linear $k$-subspace $E \in \grasskn$ and every Borel set $M \subset E$ it holds
    \begin{equation}\label{eqrappresentazioneareaaffine}
    \haus^k(M) \sigma_k (E) =
    \int_{\grasskn} {\int_{L} { \haus^0( p_L^{-1} (y) \cap M ) } \, d \haus^k(y) } \,d \varphi_{n-k}^{\perp} (L)
    \text{.}
    \end{equation}
\end{proposition}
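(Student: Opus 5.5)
The plan is to reduce both implications to one elementary identity. For fixed $E\in\grasskn$, $L\in\grasskn$ and a Borel set $M\subset E$, I claim that
\begin{equation*}
\int_L \haus^0\bigl(p_L^{-1}(y)\cap M\bigr)\,d\haus^k(y)=J(E,L)\,\haus^k(M).
\end{equation*}
Once this holds, I integrate it against $\varphi_{n-k}^{\perp}$ and use Remark~\ref{remigequivperanisotropia}. That remark lets me replace $\int_{\grassnmenokn}[E,L]\,d\varphi_{n-k}(L)$ by $\int_{\grasskn}J(E,L)\,d\varphi^\perp_{n-k}(L)$, by the change of variables $L\mapsto L^\perp$ and the definition $[E,L]=J(E,L^\perp)$. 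The right-hand side of \eqref{eqrappresentazioneareaaffine} then becomes $\haus^k(M)\int_{\grasskn}J(E,L)\,d\varphi^\perp_{n-k}(L)$.

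To prove the identity I look at the restricted projection $p_L|_E:E\to L$, which is linear between two $k$-dimensional spaces. If $J(E,L)=0$, the map is not injective and its image $p_L(E)$ is a proper subspace of $L$. Hence $\haus^k(p_L(M))=0$, and the left side is the integral of a function supported on an $\haus^k$-null set, so it vanishes. The integrand is $\haus^k$-measurable by the standard area formula measurability, so this integral is indeed $0$. If $J(E,L)\neq 0$, the map is a linear isomorphism. Then each fiber $p_L^{-1}(y)\cap M$ contains at most one point, and the left side equals $\haus^k(p_L(M))$. By the linear change of variables (area formula) in orthonormal coordinates this is $|\det|\,\haus^k(M)=J(E,L)\haus^k(M)$, the determinant being taken with respect to the bases in Definition~\ref{defjacobiano}.

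For the forward implication, assume $\sigma_k(E)=\int J(E,L)\,d\varphi^\perp_{n-k}(L)$. Multiplying by $\haus^k(M)$ and inserting the identity inside the integral gives \eqref{eqrappresentazioneareaaffine}. For the converse, assume \eqref{eqrappresentazioneareaaffine} holds for every $E$ and $M$. I choose $M$ to be the unit cube, or any Borel subset of $E$ with $0<\haus^k(M)<\infty$. The identity turns the right side into $\haus^k(M)\int J(E,L)\,d\varphi^\perp_{n-k}$, and dividing by $\haus^k(M)$ recovers the integral geometric formula. Remark~\ref{remigequivperanisotropia} then converts this back to the form of Definition~\ref{defformulaigperanisotropia}.

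The one point needing care is the use of Tonelli: I must check joint measurability in $(L,y)$ of $\haus^0(p_L^{-1}(y)\cap M)$, or else avoid it. I avoid it, because the inner integral is computed explicitly as $J(E,L)\haus^k(M)$. That quantity is continuous in $L$, since $J$ is continuous on the Grassmannian with the given metric, so the outer integral is well defined. The main obstacle is therefore only a careful statement of the linear area formula and of the degenerate case.
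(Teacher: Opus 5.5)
Your proposal is correct and follows the paper's own argument. The paper also applies the linear area formula to $p_L|_E$ to get $J(E,L)\,\haus^k(M)=\haus^k(p_L(M))=\int_L \haus^0(p_L^{-1}(y)\cap M)\,d\haus^k(y)$, integrates this against $\varphi^\perp_{n-k}$, and proves the converse by choosing $M$ with $0<\haus^k(M)<\infty$ and dividing; your explicit treatment of the degenerate case $J(E,L)=0$ and your remark that no joint measurability is needed are simply more careful versions of steps the paper leaves implicit.
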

\begin{proof}
If $\sigma_k$ admits an integral geometric representation, then by Remark \ref{remigequivperanisotropia} and the area formula applied to the linear map $p_L|_E:E\to L$, see equation 8.2 of \cite{MR756417}, we obtain
\begin{equation*}
    \begin{split}
        \haus^k(M) \sigma_k(E)
        &=
        \haus^k(M)
        \int_{\grasskn} J(E,L) \, d \varphi_{n-k}^{\perp} (L)  \\
        &=
        \int_{\grasskn} \haus^k(p_L(M)) \, d \varphi_{n-k}^{\perp}(L) \\
        &=
        \int_{\grasskn} {\int_{L} { \haus^0( p_L^{-1} (y) \cap M ) } \, d \haus^k(y) } \,d \varphi_{n-k}^{\perp} (L)
        \text{.}
    \end{split}
\end{equation*}
Conversely, if we assume that \eqref{eqrappresentazioneareaaffine} holds, then by the same computation above, after dividing by $\haus^k(M)$ for some Borel set $M\subset E$ with $0<\haus^k(M)<+\infty$, we obtain
$$
\sigma_k(E)=\int_{\grasskn}J(E,L)\,d\varphi_{n-k}^{\perp}(L),
$$
and therefore $\sigma_k$ admits an integral geometric representation via $\varphi_{n-k}$.
\end{proof}

\subsection{The planar case}\label{sec:planar_case}
\begin{rem}
By Definition~\ref{defconvex1anis}, if $\sigma_1$ is convex then $\norma{\cdot}_{\sigma_1}$ is a norm on $\rgrande^n$. We set $$C_{\sigma_1} \vcentcolon = \{ u \in \rgrande^n :  \norma{u}_{\sigma_1} \le 1 \} \subset \rgrande^n.$$ If $\sigma_1$ is convex, then $C_{\sigma_1}$ is convex, compact, centrally symmetric and $0\in \interno C_{\sigma_1}$. Conversely, any convex, compact, centrally symmetric set $C \subset \rgrande^n$ with $0\in\interno C$ identifies and is identified by a norm having $C$ as unit ball.
\end{rem}
In this section, we show that any $1$-anisotropic norm in $\rgrande^2$ admits an integral geometric representation. This result is well-known. For the reader’s convenience, we provide a self-contained constructive proof based on approximation by centrally symmetric polygons.

Let $C \subset \rgrande^2$ be a convex, compact, centrally symmetric set with $0\in\interno C$. Let us first consider the case $C=P \subset \rgrande^2$ with $P$ a polygon.
Let $2N \geq 4$ be the number of edges of $P$. We fix any vertex of $P$ and we call $v_1$ the vector identified by such vertex. Moving counterclockwise from $v_1$, we name the vertices $v_2, \dots, v_{2N}$.
For $i=1, \dots, 2N$, we call $F_i$ the counterclockwise edge of $P$ with first extreme point $v_i$ and we call $w_i:=Rv_i$, where $R$ is the counterclockwise rotation of $90$ degrees.
\begin{rem}\label{remverticiefacce}
    \rm
    \begin{enumerate}
        \item By central symmetry of $P$, for any $i \in \{ 1, \dots, N \}$
        $$
        v_{i+ N} = - v_i, \quad w_{i+ N} = - w_i.
        $$
        \item With this choice of $w_1, \dots, w_{2N}$ we have
        $$
        w_i = \begin{pmatrix}
        w_{i,x} \\
        w_{i,y}
        \end{pmatrix} = \begin{pmatrix}
        -v_{i,y} \\
        v_{i,x}
        \end{pmatrix}
        \text{.}
        $$
        \item If the edge $F_i$ is not parallel to any axis, it lies in a line of equation $ y = m_i x + q_i$ for some nonzero $m_i, q_i \in \rgrande,$ $i \in \{ 1, \dots, 2N \}.$ We have that by symmetry of $P$
        $$
        m_{i +N} = m_i, \quad q_{i + N} = -q_i
        $$
        for any $i \in \{ 1, \dots, N\}.$
        Moreover, a vector $u = \begin{pmatrix}
            u_x \\
            u_y
        \end{pmatrix} \in \rgrande^2$ belongs to the line containing the edge $F_i$ if and only if
        $$
        u_y = m_iu_x + q_i
        \quad
        \iff
        \quad
        a_i u_x + b_i u_y = 1,
        $$
        where $a_i = -\frac{m_i}{q_i}, b_i = \frac{1}{q_i}.$
    \end{enumerate}
\end{rem}
\begin{proposition}\label{igperpoligoni}
Let $P \subset \rgrande^2$ be a centrally symmetric, convex polygon.
Then there exist positive coefficients $\lambda_1, \dots, \lambda_{N} \in \rgrande$ depending on $P$ such that:
\begin{enumerate}
\item\label{enumigperpoligono} for any vector $u \in \partial P$ it holds
\begin{equation}\label{eqigperpoligonipiani}
\sum_{i = 1}^{N} \lambda_i \valass{\prodscal{u}{w_i}} =1,
\end{equation}
\item if $B_r(0) \subset P$ for some $r \in (0, + \infty),$ then there exists a constant $M = M(r)$ such that
    $$
    \sum_{i=1}^{N} \lambda_i \norma{v_i} \leq M.
    $$
\end{enumerate}
\end{proposition}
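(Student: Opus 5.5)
The plan is to identify the gauge (Minkowski functional) $g_P$ of $P$ with the function
$$
f(u)\vcentcolon=\sum_{i=1}^{N}\lambda_i\valass{\prodscal{u}{w_i}},
$$
for a suitable choice of $\lambda_i>0$. Then \eqref{eqigperpoligonipiani} is exactly the statement $f=g_P=1$ on $\partial P$.

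\emph{Step 1: structure of $g_P$.} The gauge $g_P$ is even and positively $1$-homogeneous. On each closed cone $K_j$ spanned by two consecutive vertices $v_j,v_{j+1}$ it is linear:
$$
g_P(u)=\prodscal{n_j}{u}\quad\text{on }K_j,
$$
where $n_j$ is the vector with $\prodscal{n_j}{u}=1$ on the edge $F_j$. In the notation of Remark~\ref{remverticiefacce}, $n_j=(a_j,b_j)$; edges parallel to an axis are handled in the same way. Central symmetry gives $n_{j+N}=-n_j$.

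\emph{Step 2: choice of $\lambda_i$ via gradient jumps.} At a vertex $v_i$ we have $\prodscal{n_i}{v_i}=\prodscal{n_{i-1}}{v_i}=1$. Hence $n_i-n_{i-1}\perp v_i$, and therefore
$$
n_i-n_{i-1}=c_i w_i\quad\text{for some }c_i\in\rgrande.
$$
Convexity of $P$, together with the fact that $v_i$ is a genuine vertex (so $n_i\neq n_{i-1}$), forces $c_i>0$, given the counterclockwise orientation and $w_i=Rv_i$. Checking this sign is the only delicate point.

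By symmetry $c_{i+N}=c_i$, since $n_{i+N}-n_{i+N-1}=-(n_i-n_{i-1})$ and $w_{i+N}=-w_i$.

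The function $f$ is also piecewise linear and $1$-homogeneous, with possible kinks only along the lines $\rgrande v_i$, where $\prodscal{u}{w_i}$ changes sign. Crossing the ray through $v_i$ counterclockwise, its gradient jumps by $2\lambda_i w_i$: the sign of $\prodscal{u}{w_i}$ goes from $-$ to $+$, because $\prodscal{Rv_i}{Rv_i}>0$ on the counterclockwise side. Crossing $-v_i$, the jump is $2\lambda_i(-w_i)=2\lambda_i w_{i+N}$. I therefore set
$$
\lambda_i\vcentcolon=\frac{c_i}{2}>0,\qquad i=1,\dots,N.
$$

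\emph{Step 3: conclusion of (1).} With this choice, $h\vcentcolon=f-g_P$ is continuous, $1$-homogeneous, and piecewise linear on the cones $K_j$, with zero gradient jump across every ray. Hence $h$ is a single linear function $\prodscal{m}{u}$. Since $h$ is even, $m=0$, so $f=g_P$, and on $\partial P$ this gives \eqref{eqigperpoligonipiani}.

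\emph{Step 4: the bound in (2).} I would average $f=g_P$ over the unit circle. Note that $\norma{w_i}=\norma{v_i}$, and for any unit vector $e$,
$$
\int_{\sfera^1}\valass{\prodscal{u}{e}}\,d\haus^1(u)=4.
$$
This gives
$$
\int_{\sfera^1}g_P\,d\haus^1=4\sum_{i=1}^N\lambda_i\norma{v_i}.
$$
If $B_r(0)\subset P$, then $g_P(u)\le 1/r$ for $\norma{u}=1$, so
$$
\sum_{i=1}^N\lambda_i\norma{v_i}\le\frac{2\pi}{4r}=\frac{\pi}{2r}=:M(r).
$$
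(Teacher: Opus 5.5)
Your proof is correct. For part (1) you use the paper's coefficients ($2\lambda_i w_i=n_i-n_{i-1}$, with $n_0=-n_N$ built into your cyclic indexing), but you verify the identity differently.

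\begin{itemize}
\item \emph{Part (1), different verification.} The paper telescopes to get the explicit gradient $\sum_{i\le k}\lambda_iw_i-\sum_{i>k}\lambda_iw_i=n_k$ of $f$ on each cone. It then checks the sign pattern of $\langle u,w_i\rangle$ on $F_k$. You instead match the gradient jumps of $f$ and of the gauge $g_P$ across all $2N$ rays, and let evenness remove the leftover linear term. This avoids the sign bookkeeping and the preliminary rotation used to write edges as $y=m_ix+q_i$.

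\item \emph{The sign of $c_i$.} Both you and the paper only assert $c_i>0$, but it is immediate in your framework. Since $g_P=\max_j\langle n_j,\cdot\rangle\ge\langle n_{i-1},\cdot\rangle$, evaluating at $v_i+\varepsilon w_i\in K_i$ gives $\varepsilon c_i\|w_i\|^2\ge 0$. Moreover $c_i\neq 0$ because $v_i$ is a genuine vertex.

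\item \emph{Part (2), a genuinely different route.} The paper bounds $\|n_i-n_{i-1}\|$ by the total variation of the coordinates of the normals. It uses at most four monotone sectors and $\|n_i\|\le 1/r$, obtaining $M=8/r$ without using part (1). You integrate $f=g_P$ over $\mathbb{S}^1$, using $\int_{\mathbb{S}^1}|\langle u,e\rangle|\,d\mathcal{H}^1(u)=4$. This gives the exact identity $\sum_i\lambda_i\|v_i\|=\frac14\int_{\mathbb{S}^1}g_P\,d\mathcal{H}^1$ and the sharper $M=\pi/(2r)$.
\end{itemize}

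That identity is also a small bonus for Proposition~\ref{propignelpiano}. It shows the total masses $\mu_k(\mathbb{S}^1)$ actually converge to $\frac14\int_{\mathbb{S}^1}\|u\|_C\,d\mathcal{H}^1(u)$, not merely that they stay bounded.
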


\begin{proof}
Up to a rotation, we can assume that
\begin{itemize}
    \item none of the vertices of $P$ lies on the coordinate axes,
    \item none of the edges of $P$ is parallel to the coordinate axes.
\end{itemize}
For each $i=1,\dots,2N$, let
$n_i \vcentcolon = (a_i,b_i),$
so that the line containing $F_i$ is given by
$\prodscal{n_i}{u}=1.
$
By Remark \ref{remverticiefacce}, for $i=1,\dots,N$ we have
$n_{i+N}=-n_i$.

We first prove point \ref{enumigperpoligono}. For $k=1,\dots,N$, define $\lambda_k>0$ by
\begin{equation}\label{eq:deflambda_clean}
2\lambda_k w_k = n_k-n_{k-1},
\qquad\text{with }n_0\vcentcolon=-n_N.
\end{equation}
This is well defined, because both $n_k$ and $n_{k-1}$ satisfy
$$
\prodscal{n_k}{v_k}=\prodscal{n_{k-1}}{v_k}=1,
$$
hence
$$
\prodscal{n_k-n_{k-1}}{v_k}=0.
$$
Therefore $n_k-n_{k-1}$ is orthogonal to $v_k$, so it is a scalar multiple of
$w_k=Rv_k$.
Since the outward normals rotate monotonically along the boundary of a convex polygon, this scalar is positive; hence $\lambda_k>0$.

Summing \eqref{eq:deflambda_clean}, for every $k=1,\dots,N$ we obtain
\begin{equation}\label{eq:telescope_clean}
\sum_{i=1}^{k}\lambda_iw_i-\sum_{i=k+1}^{N}\lambda_iw_i=n_k.
\end{equation}
Indeed,
$$
2\sum_{i=1}^{k}\lambda_iw_i=\sum_{i=1}^{k}(n_i-n_{i-1})=n_k+n_N,
$$
while
$$
2\sum_{i=k+1}^{N}\lambda_iw_i=\sum_{i=k+1}^{N}(n_i-n_{i-1})=n_N-n_k,
$$
and subtracting gives \eqref{eq:telescope_clean}.

Now let $u\in F_k$ with $k\in\{1,\dots,N\}$. Since the vertices are ordered counterclockwise and $u$ lies on the edge joining $v_k$ to $v_{k+1}$, we have
$$
\prodscal{u}{w_i}\ge 0 \quad \text{for } i\le k,
\qquad
\prodscal{u}{w_i}\le 0 \quad \text{for } i\ge k+1.
$$
Hence, by \eqref{eq:telescope_clean},
$$
\sum_{i=1}^{N}\lambda_i\valass{\prodscal{u}{w_i}}
=
\prodscal{u}{\sum_{i=1}^{k}\lambda_iw_i-\sum_{i=k+1}^{N}\lambda_iw_i}
=
\prodscal{u}{n_k}
=1.
$$
If instead $u\in F_{k}$ with $k\in\{N+1,\dots,2N\}$, then $-u\in F_{k-N}$ and therefore
$$
\sum_{i=1}^{N}\lambda_i\valass{\prodscal{u}{w_i}}
=
\sum_{i=1}^{N}\lambda_i\valass{\prodscal{-u}{w_i}}
=1.
$$
This proves \eqref{eqigperpoligonipiani}.

We now prove point (2). From \eqref{eq:deflambda_clean} and $\norma{w_i}=\norma{v_i}$ we get
$$
2\lambda_i \norma{v_i} = \norma{n_i-n_{i-1}},
$$
hence
$$
\lambda_i\norma{v_i}
\le
\frac12\bigl(\valass{a_i-a_{i-1}}+\valass{b_i-b_{i-1}}\bigr),
\qquad i=1,\dots,N,
$$
where again $n_0=-n_N$.

Assume now that $B_r(0)\subset P$. Since each line $\prodscal{n_i}{u}=1$ supports $P$, its distance from the origin is at least $r$ and therefore
$$
\norma{n_i}\le \frac1r
\qquad\text{for every }i.
$$
In particular,
$$
\valass{a_i}\le \frac1r,
\qquad
\valass{b_i}\le \frac1r.
$$
For $i=1,\dots,N$, the normals $n_i$ move counterclockwise. After splitting them into at most four sectors, each coordinate is monotone on each sector. Therefore the total variations of the sequences $(a_i)_i$ and $(b_i)_i$ are bounded by
$$
\sum_{i=1}^{N}\valass{a_i-a_{i-1}}\le \frac{8}{r},
\qquad
\sum_{i=1}^{N}\valass{b_i-b_{i-1}}\le \frac{8}{r}.
$$
Summing the previous estimate on $\lambda_i\norma{v_i}$, we conclude
$$
\sum_{i=1}^{N}\lambda_i\norma{v_i}
\le
\frac12
\sum_{i=1}^{N}\bigl(\valass{a_i-a_{i-1}}+\valass{b_i-b_{i-1}}\bigr)
\le \frac{8}{r}.
$$
Hence the thesis holds with $M(r)=8/r$.
\end{proof}

\begin{rem}
    Denoting by $\norma{\cdot}_P$ the norm whose unit ball is $P$, we have proven that for any $u \in \rgrande^2$
    $$
    \norma{u}_P
    =
    \sum_{i = 1}^{N} \lambda_i \valass{\prodscal{u}{w_i}}
    \text{.}
    $$
\end{rem}
If $C \subset \rgrande^2$ is any compact, convex, centrally symmetric set with $0\in\interno C$, we can prove an integral geometric formula for $\norma{\cdot}_C$, the norm induced by $C$, by approximation via polygons.
\begin{proposition}\label{propignelpiano}
Let $C \subset \rgrande^2$ be a compact, centrally symmetric and convex set with $0\in\interno C$. Then $\norma{\cdot}_C$ admits an integral geometric representation, i.e. there exists a measure $\mu_C$ on $\sfera^1$ such that for every $u \in \rgrande^2$
$$
\norma{u}_C
= \int_{\sfera^1} {\valass{\prodscal{u}{\omega}}} \,d \mu_C(\omega)
\text{.}
$$
\end{proposition}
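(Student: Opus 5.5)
The plan is to approximate $C$ by centrally symmetric convex polygons, apply Proposition~\ref{igperpoligoni} to each of them, and pass to the limit in the representing measures using weak-$*$ compactness on the compact space $\sfera^1$.

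\emph{Approximating polygons.} Since $0\in\interno C$, fix $r>0$ with $B_{2r}(0)\subset C$. Choose centrally symmetric convex polygons $P_j$ with $B_r(0)\subset P_j$ and $P_j\to C$ in the Hausdorff distance. For instance, let $P_j$ be the convex hull of $\pm x_1,\dots,\pm x_{m_j}$, where the points $x_l\in\partial C$ form a $1/j$-dense set of $\partial C$. The first thing to check is the resulting uniform convergence of norms: the gauge functions satisfy $\norma{u}_{P_j}\to\norma{u}_C$ uniformly on $\sfera^1$. This follows because the $P_j$ are all sandwiched between $B_r(0)$ and a fixed ball, and gauges are Lipschitz in the Hausdorff distance under such uniform inner and outer radius bounds.

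\emph{Representing measures for the polygons.} By Proposition~\ref{igperpoligoni} and the subsequent remark, for every $u$
$$\norma{u}_{P_j}=\sum_{i=1}^{N_j}\lambda_i^j\valass{\prodscal{u}{w_i^j}}.$$
Write $w_i^j=\norma{w_i^j}\,\omega_i^j$ with $\omega_i^j\in\sfera^1$, and define the discrete measure
$$\mu_j\vcentcolon=\sum_i \lambda_i^j\norma{v_i^j}\,\delta_{\omega_i^j}.$$
Here we use $\norma{w_i^j}=\norma{v_i^j}$. Then
$$\norma{u}_{P_j}=\int_{\sfera^1}\valass{\prodscal{u}{\omega}}\,d\mu_j(\omega).$$
Point (2) of Proposition~\ref{igperpoligoni} gives the uniform bound $\mu_j(\sfera^1)\le M(r)$. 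This uniform mass bound is the key input, and it is exactly why that point was proved.

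\emph{Passage to the limit.} Since $\sfera^1$ is compact and the masses are uniformly bounded, Banach--Alaoglu (or Prokhorov) yields a subsequence with $\mu_j\rightharpoonup^*\mu_C$, a finite positive measure on $\sfera^1$. For fixed $u$, the function $\omega\mapsto\valass{\prodscal{u}{\omega}}$ is continuous on $\sfera^1$. Hence
$$\int\valass{\prodscal{u}{\omega}}\,d\mu_j\to\int\valass{\prodscal{u}{\omega}}\,d\mu_C.$$
Combining this with $\norma{u}_{P_j}\to\norma{u}_C$ gives the formula for every $u\in\rgrande^2$.

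If desired, one can also translate the result into the framework of Definition~\ref{defformulaigperanisotropia}. Push $\mu_C$ forward to $\grass(1,2)$ via $\omega\mapsto\omega^\perp$ (or $\omega\mapsto L_\omega$), and note that $\valass{\prodscal{u}{\omega}}=[L_u,\omega^{\perp}]$ for unit $u$.

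\emph{Main obstacle.} The main difficulty is the construction of the approximating polygons with a uniform inner ball and the verification that the gauges converge uniformly. Everything else reduces to Proposition~\ref{igperpoligoni} and weak-$*$ compactness. To handle this step, I would use the inclusions $(1-\varepsilon_j)C\subset P_j\subset C$. These hold because $P_j$ is the convex hull of a $1/j$-dense subset of $\partial C$ and $B_{2r}(0)\subset C$, with $\varepsilon_j\to0$. They give directly
$$\norma{u}_C\le\norma{u}_{P_j}\le(1-\varepsilon_j)^{-1}\norma{u}_C.$$
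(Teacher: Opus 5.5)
Your proof is correct and follows the same route as the paper. Both arguments approximate by polygons, use the representation and the uniform mass bound $M(r)$ from Proposition~\ref{igperpoligoni}, apply weak-$*$ compactness on $\sfera^1$, and pass to the limit pointwise in $u$.

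The only difference is the choice of approximating polygons. The paper uses nested circumscribed polygons $P_k\supset C$, cut out by supporting half-planes at dyadically spaced boundary points. This makes $B_r(0)\subset P_k$ automatic, but it needs a separate contradiction argument to prove Hausdorff convergence. Your inscribed symmetric hulls instead rely on the sandwich $(1-\varepsilon_j)C\subset P_j\subset C$. That inclusion does hold: for example, $C\subset P_j+\overline{B}_{1/j}(0)$ together with the fact that the support function of $C$ is at least $2r$ gives $\varepsilon_j=1/(2rj)$. The sandwich then yields both the uniform inner ball and the uniform convergence of the gauges in one step.
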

\begin{proof}
We claim that there exists a nested sequence $\{P_k \}_k$ of centrally symmetric, convex polygons in $\rgrande^2$ such that:
\begin{enumerate}
    \item\label{enumconvessocontenuto} $C \subset P_k$ for every $k$,
    \item\label{enumdisthaus} The Hausdorff distance $d_{\haus} (P_k,C) \longrightarrow 0$ as $k \to + \infty$.
\end{enumerate}
To prove such claim consider for every $k\geq 2$ the points $Q^k_1,\dots,Q^k_{2^k}$, where each $Q^k_j$ is obtained intersecting $\partial C$ with the halfline
$$
\left\{\left(t\cos\left(\frac{j-1}{2^k}2\pi\right),t\sin\left(\frac{j-1}{2^k}2\pi\right)\right) : t \in [0, + \infty)\right\}
\text{.}
$$
For every $j=1,\ldots 2^{k-1}$, consider a half-space $H^k_j$ such that
\begin{itemize}
    \item if $Q^k_j=Q^{k-1}_i$ for some $i$, then $H^k_j=H^{k-1}_i$,
    \item $H^k_j\supset C$ for every $j$,
    \item $\partial H^k_j\ni Q^k_j$ for every $j$.
\end{itemize}

For $j=2^{k-1}+1,\ldots, 2^{k}$, set $H^k_j \vcentcolon=-H^k_{j-2^{k-1}}$.
Finally, let ${P}_k$ be the convex polygon
$${P}_k \vcentcolon =\bigcap_{j=1}^{2^k}H^k_j
\text{.}
$$
By construction, property \ref{enumconvessocontenuto} holds and ${P}_k$ is centrally symmetric and convex. We prove that also property \ref{enumdisthaus} holds. Denote
$$
D:=\bigcap_{k\in\ngrande}P_k
\text{.}
$$
Assume by contradiction that \ref{enumdisthaus} does not hold. Since the $P_k$'s are nested and contain $C$, we have that $d_{\haus} (P_k,C) \xrightarrow[]{k \to + \infty} 0$ if and only if $D \equiv C$, hence we assume that there exists a point $p\in D\setminus C$. Denote by $z$ the point $z:=\conv \{0,p \} \cap \partial C$. We have that
\begin{itemize}
    \item For every $k\geq 2$ and for every $j=1,\dots 2^k$, the point $Q^k_j\in \partial D$: indeed, $Q^k_j\in C$ by construction and therefore $Q^k_j\in D$, but $Q^k_j\in\partial P_k$ and hence $Q^k_j\not\in \interno D$;
    \item By construction, the set $\mathcal{Q} \vcentcolon = \bigcup_{k \in \ngrande} \bigcup_{j = 1, \ldots, 2^k}\{ Q^k_j\}$ is dense in $\partial C$.
\end{itemize}

Hence, $z\in \partial C = \overline{\mathcal{Q}} \subset \partial D$.
Since $D$ is closed and convex and $z\in \partial D$, there exists a supporting closed half-space $H$ such that
$D\subset H$ and $z\in \partial H$.
Moreover, $0\in \interno C\subset \interno D$, hence $0\in \interno H$.
Since $z\in \conv\{0,p\}$ and $z\in \partial H$, while $0\in \interno H$, it follows that
$p\notin H$.
This contradicts the fact that $p\in D\subset H$.

Now we associate to any polygon $P_k$ the measure $\mu_k \vcentcolon = \sum_{i=1}^{N_k} \lambda_i^k \norma{v_i^k} \delta_{\omega_i^k}$ supported on $\sfera^1,$ where $\omega_i^k \vcentcolon = \frac{w_i^k}{\norma{w_i^k}}$ and $\lambda_i^k,$ $v_i^k,$ $w_i^k$ are determined as in Proposition \ref{igperpoligoni}.
Since $0\in\interno C$, there exists $r>0$ such that $B_r(0)\subset C\subset P_k$ for every $k$. Therefore, by Proposition \ref{igperpoligoni}, the measures $\mu_k$ have uniformly bounded total variation:
$$
\mu_k(\sfera^1)=\sum_{i=1}^{N_k}\lambda_i^k\norma{v_i^k}\le M(r).
$$
Hence, there exists a measure $\mu_{\infty}$ on $\sfera^1$ such that
    $
    \mu_k \stackrel{\ast}{\rightharpoonup}
 \mu_{\infty}
    $
    up to subsequence. We claim that $\mu_C:=\mu_{\infty}$ satisfies the thesis. Indeed, since $d_{\haus} (P_k,C) \longrightarrow 0$ as $k \to + \infty,$ we have that
    $$
    \norma{u}_{P_k} \longrightarrow \norma{u}_C  \qquad \text{uniformly in $u\in\sfera^1$, as } k \to + \infty,
    $$
    and, for every $u\in\sfera^1$,
    $$
    \int_{\sfera^1} {\valass{\prodscal{u}{\omega}}} \,d \mu_k(\omega)
    \longrightarrow
    \int_{\sfera^1} {\valass{\prodscal{u}{\omega}}} \,d \mu_{\infty}(\omega)
    \text{,}
    $$
    by the weak $*$ convergence of the measures $\mu_k.$
    Since by Proposition \ref{igperpoligoni}
    $$
    \norma{u}_{P_k} =
    \sum_{i = 1}^{N_k} \lambda_i^k \norma{v_i^k} \valass{\prodscal{u}{\omega_i^k}} = \int_{\sfera^1} {\valass{\prodscal{u}{\omega}}} \,d \mu_k(\omega)
    \text{,}
    $$
    the thesis follows.
\end{proof}

We stress that this argument is genuinely two-dimensional: it relies on the planar identification of a direction with its orthogonal one. For this reason, the proof does not extend directly to polyhedra in $\rgrande^3$, where the unit cube already provides a counterexample, see Example \ref{exspaziipermetrici}, point \ref{exnonipermetrico}).

\subsection{Convex $1$-anisotropies and the transport energy}
\begin{proposition}\label{propanisotropiaeig}
    Let $\sigma_1$ be a convex $1$-anisotropy in $\rgrande^n$. Then
    \begin{enumerate}
        \item if $n=2$, $\sigma_1$ admits an integral geometric representation;
        \item if $n \geq 3$, $\sigma_1$ admits an integral geometric representation if and only if $(\rgrande^n, \norma{\cdot}_{\sigma_1})$ is hypermetric.
    \end{enumerate}
\end{proposition}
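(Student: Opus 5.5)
The plan is to reduce both items to results already established, after translating between the language of Definition~\ref{defformulaigperanisotropia} (measures on $\grass(n-1,n)$ and the quantity $[E,L]$) and the language of norms written as averages of $|\langle u,\omega\rangle|$ over directions.

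\textbf{The basic identity for $k=1$.} Let $E=L_u\in\grass(1,n)$ with $u$ a unit vector, and let $L\in\grass(n-1,n)$ with unit normal $\omega$, so that $L^\perp=\mathrm{span}(\omega)$. Then $[E,L]=J(E,L^\perp)=|\langle u,\omega\rangle|$. Indeed, the orthogonal projection of $E$ onto the line $L^\perp$ sends $u$ to $\langle u,\omega\rangle\omega$.

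It follows that $\sigma_1$ admits an integral geometric representation if and only if there is a finite positive measure $\mu$ on $\sfera^{n-1}$ with
$$\norma{u}_{\sigma_1}=\int_{\sfera^{n-1}}|\langle u,\omega\rangle|\,d\mu(\omega)\qquad\text{for all unit } u.$$
Both sides are positively $1$-homogeneous, so this identity then holds for all $u\in\rgrande^n$. The two directions of the equivalence go as follows.
\begin{itemize}
\item Given $\varphi_{n-1}$, push it forward through $L\mapsto L^\perp\in\grass(1,n)$ and lift it to $\sfera^{n-1}$ symmetrically: put half the mass on each of the two unit vectors spanning $L^\perp$. This gives $\mu$.
\item Conversely, given $\mu$, push it forward through $\omega\mapsto\omega^\perp\in\grass(n-1,n)$. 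This is well defined because the integrand is even in $\omega$.
\end{itemize}
Measurability of these maps is standard, since they are continuous with respect to the topology on the Grassmannian fixed in the definitions.

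\textbf{Case $n=2$.} Since $\sigma_1$ is convex, $\norma{\cdot}_{\sigma_1}$ is a norm, and its unit ball $C_{\sigma_1}$ is compact, convex, centrally symmetric, with $0\in\interno C_{\sigma_1}$. Proposition~\ref{propignelpiano} then gives a measure $\mu_C$ on $\sfera^1$ representing $\norma{\cdot}_{\sigma_1}$ in exactly the form above.

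$\mu_C$ is finite: it is a weak-$*$ limit of measures whose total mass is bounded by $M(r)$. By the translation above, this yields $\varphi_{1}$ on $\grass(1,2)$, and hence an integral geometric representation of $\sigma_1$.

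\textbf{Case $n\geq3$.} The $1$-anisotropy induced by the norm $G=\norma{\cdot}_{\sigma_1}$ is exactly $\sigma_1$. To check this, use the definition: for a unit $v$ we have $\sigma_1(L_v)=G(v)$, and this agrees with $\norma{v}_{\sigma_1}=\norma{v}_\infty\,\sigma_1(L_{\hat v})$. Theorem~\ref{thigpernormainrn} therefore applies directly: an integral geometric representation exists if and only if $(\rgrande^n,d_G)$ is hypermetric.

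The only point needing care is that Theorem~\ref{thigpernormainrn} uses the definition of integral geometric representation from the cited reference, which is phrased via measures of projections. Its proof already asserts that this definition is equivalent to ours through Remarks~\ref{rem:J-volume-ratio} and~\ref{remigequivperanisotropia}, so nothing further is needed.

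\textbf{Main obstacle.} I expect the only real care to be in the bookkeeping of the first step: matching measures on $\grass(n-1,n)$, $\grass(1,n)$ and $\sfera^{n-1}$, keeping them finite, and handling the factor $\tfrac12$ in the symmetric lift. The rest is a direct citation of Proposition~\ref{propignelpiano} and Theorem~\ref{thigpernormainrn}.
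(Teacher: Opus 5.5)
Your proposal is correct and follows the paper's proof. The paper simply cites Proposition~\ref{propignelpiano} for $n=2$ and Theorem~\ref{thigpernormainrn} for $n\geq 3$. Your identity $[L_u,L]=|\langle u,\omega\rangle|$ and the push-forward $\omega\mapsto\omega^\perp$ spell out the passage from a measure on $\mathbb{S}^1$ to a finite measure on $\mathrm{G}(1,2)$, which the paper leaves implicit.
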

\begin{proof}
    \begin{enumerate}
        \item It follows from Proposition \ref{propignelpiano}. We keep the explicit planar construction because it also yields polygonal approximations with uniformly controlled representing measures.
        \item It follows from Theorem \ref{thigpernormainrn}.
    \end{enumerate}
\end{proof}

In the next lemma we apply Proposition \ref{propcollegamentoconmisuraleb} to the anisotropic branched transport setting and show that, if $\sigma_k$ admits an integral geometric representation, then $\massaanisotropa$ admits a corresponding slicing formula.
\begin{lemma}\label{lemmaigpermasssaanisotropa}
     If there exists a finite positive measure $\varphi_{n-k}^{\perp}$ on $\grasskn$ such that
    \begin{equation*}
        \sigma_k (F) = \int_{\grasskn} J(F,L)\, d \varphi_{n-k}^{\perp} (L)
    \end{equation*}
    for every $F \in \grasskn$, then
    \begin{equation*}
        \massaanisotropa(R) = \int_{\grasskn} {
        \int_{L} {
        \massa_H (\slice{R}{p_L}{y})
        }
        \, d \haus^k (y)
        } \, d \varphi_{n-k}^{\perp} (L)
    \end{equation*}
    for every $R \in \corrett$.
\end{lemma}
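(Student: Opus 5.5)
The plan is to reduce the statement to a pointwise identity on the rectifiable set $E$ and then integrate, using the area/coarea formula for the projections $p_L$ together with the standard description of slices of rectifiable currents.

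Write $R=\llbracket E,\tau,\theta\rrbracket$ with $\theta\neq 0$ $\haus^k$-a.e. on $E$ (discarding the null-multiplicity part does not change either side). For $\haus^k$-a.e. $x\in E$ the approximate tangent space $\mathrm{Tan}(E,x)=L_{\tau(x)}$ exists, and $\norma{\tau(x)}_{\sigma_k}=\sigma_k(L_{\tau(x)})$ since $\tau$ is unit. Inserting the hypothesis pointwise gives
\begin{equation*}
\massaanisotropa(R)=\int_E H(\theta(x))\int_{\grasskn} J(L_{\tau(x)},L)\,d\varphi_{n-k}^{\perp}(L)\,d\haus^k(x).
\end{equation*}
The integrand is nonnegative and jointly Borel: $(x,L)\mapsto J(L_{\tau(x)},L)$ is a composition of a Borel map with a continuous function on $\grasskn\times\grasskn$. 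Tonelli therefore lets me swap the order of integration. This reduces the claim to showing, for each fixed $L$, that
\begin{equation*}
\int_E H(\theta)\,J(\mathrm{Tan}(E,x),L)\,d\haus^k(x)=\int_L \massa_H(\slice{R}{p_L}{y})\,d\haus^k(y).
\end{equation*}

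For fixed $L$, the tangential Jacobian of the Lipschitz map $p_L|_E$ at $x$ is exactly $J(\mathrm{Tan}(E,x),L)$, by Definition~\ref{defjacobiano}. The area formula for rectifiable sets, applied with the nonnegative weight $H\circ\theta$ (the same tool as in Proposition~\ref{propcollegamentoconmisuraleb}, now for a curved $E$), yields
\begin{equation*}
\int_E H(\theta)\,J\,d\haus^k=\int_L\sum_{x\in E\cap p_L^{-1}(y)}H(\theta(x))\,d\haus^k(y).
\end{equation*}
I then use the slicing theory for rectifiable currents (\cite[4.3.8]{federer}): for $\haus^k$-a.e. $y\in L$,
\begin{equation*}
\slice{R}{p_L}{y}=\sum_{x\in E^+\cap p_L^{-1}(y)}\pm\,\theta(x)\,\delta_x,
\end{equation*}
where $E^+=\{x: J(\mathrm{Tan}(E,x),L)>0\}$ and the sign records whether $p_L$ preserves or reverses the orientation $\tau(x)$. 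This set is finite for a.e. $y$ where the $\haus^0$-count is integrable. Since $H$ is even, $\massa_H(\slice{R}{p_L}{y})=\sum H(\theta(x))$ over these points. By the area formula, the points of $E\setminus E^+$ project onto an $\haus^k$-null subset of $L$, so the two sums agree for a.e. $y$, which gives the fixed-$L$ identity.

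The main obstacle is this slice identification. In particular, I must check that no cancellation occurs between distinct points of a fiber: they are distinct points, so their Dirac masses cannot cancel, and $\massa_H$ of an atomic $0$-current is the sum of $H$ of the atom weights. I also need the a.e. finiteness of fibers, so that $\slice{R}{p_L}{y}$ is a genuine finite sum and the $\massa_H$ notation applies. Measurability in $L$ of the right-hand side then follows from the Tonelli step. The equality holds in $[0,+\infty]$ and needs no finiteness of $\massaanisotropa(R)$.
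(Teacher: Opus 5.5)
Your argument is correct, but it is organised differently from the paper's. The paper first proves the identity for polyhedral currents. There each simplex lies in a fixed $k$-plane, so Proposition~\ref{propcollegamentoconmisuraleb} (the linear area formula combined with the integral representation) applies piece by piece. For general $R\in\corrett$ the paper then refers to the approximation-and-slicing proof of \cite[Lemma 3.1]{cdrms17}, with the Euclidean density replaced by $\sigma_k$.

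You bypass both the polyhedral step and the approximation. You insert the hypothesis pointwise at $\mathrm{Tan}(E,x)$ and swap the integrals by Tonelli; this is legitimate, since $\haus^k\mrestr(E\cap\{\theta\neq 0\})$ is $\sigma$-finite and $\varphi_{n-k}^{\perp}$ is finite. You then use that the tangential Jacobian of $p_L|_E$ is exactly $J(\mathrm{Tan}(E,x),L)$. So the weighted area formula on the curved set $E$ plays the role that Proposition~\ref{propcollegamentoconmisuraleb} plays on flat pieces.

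This gives a self-contained proof for arbitrary $R$, valid in $[0,+\infty]$, and measurability in $L$ of the inner integral comes for free from Tonelli. It also makes explicit a step the paper only sketches, since Proposition~\ref{propcollegamentoconmisuraleb} by itself covers only subsets of a fixed plane. The price is that you rely on the area formula for rectifiable sets and on the structure of slices of real-multiplicity rectifiable currents \cite[4.3.8]{federer}. The computation written out in the paper uses only the linear area formula.

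One correction: you do not need a.e.\ finiteness of the fibres, and in general you do not have it. If $\haus^k(E)=+\infty$ the fibre count need not be integrable. For example, take $n=2$, $k=1$, $\varphi_1^{\perp}=\delta_{\rgrande e_1}+\delta_{\rgrande e_2}$ (so $\sigma_1(F)=|\cos\alpha|+|\sin\alpha|$). Let $R=\llbracket E,e_1,\theta\rrbracket$ with $E=\bigcup_{m\ge 1}[0,1]\times\{1/m\}$ and $\theta=2^{-m}$ on the $m$-th segment. Then $E\cap p_{\rgrande e_1}^{-1}(y)$ is infinite for every $y\in(0,1)$, and the slice is $\sum_m 2^{-m}\delta_{(y,1/m)}$.

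The remedy is to read $\massa_H\bigl(\sum_i\theta_i\delta_{x_i}\bigr)=\sum_i H(\theta_i)$ for countably many distinct $x_i$, which is how the statement must be understood anyway. With that reading nothing else in your argument changes:
\begin{itemize}
\item the area formula already sums over the whole fibre with counting measure;
\item the slice formula gives the countable atomic description for a.e.\ $y$ (if you want finite-measure hypotheses, apply it on the pieces $\{2^{-j-1}<|\theta|\le 2^{-j}\}$ and sum);
\item your no-cancellation remark carries over verbatim.
\end{itemize}
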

\begin{proof}
We first prove the formula for a polyhedral current
$$
P=\sum_{j=1}^N \llbracket \sigma_j,\tau_j,\theta_j\rrbracket,
$$
where the simplices $\sigma_j$ have pairwise disjoint interiors and constant multiplicities $\theta_j$.
For each $j$, let $E_j$ be the $k$-dimensional linear subspace parallel to $\sigma_j$. By Proposition~\ref{propcollegamentoconmisuraleb}, applied to $M=\sigma_j\subset E_j$, we get
$$
\haus^k(\sigma_j)\sigma_k(E_j)
=
\int_{\grasskn}\int_L \haus^0\bigl(p_L^{-1}(y)\cap \sigma_j\bigr)\,d\haus^k(y)\,d\varphi_{n-k}^{\perp}(L).
$$
Multiplying by $H(\theta_j)$ and summing over $j$, we obtain
$$
\massaanisotropa(P)
=
\int_{\grasskn}\int_L \massa_H(\slice{P}{p_L}{y})\,d\haus^k(y)\,d\varphi_{n-k}^{\perp}(L).
$$
For a general rectifiable current $R$, one can repeat verbatim the proof of \cite[Lemma 3.1]{cdrms17}. Indeed, the approximation and slicing arguments are unchanged; the only difference is that the Euclidean density term is replaced by $\sigma_k$ and this replacement is justified by Proposition~\ref{propcollegamentoconmisuraleb}.
\end{proof}

\section{Lower semicontinuity and relaxation}\label{sec:lscrel}
\subsection{Proof of $\massaanisotropa \leq \Phi_{H,\sigma_k}$ on $\corrett$}

We claim that, if $\sigma_k$ admits an integral geometric representation, then $\massaanisotropa$ is sequentially flat lower semicontinuous on $\corrett$.
The proof is similar to the one presented in \cite{cdrms17}; we show the main steps for the sake of completeness. If $ R = \sum_{i = 1}^M \llbracket \{ x_i\}, 1 , \theta_i \rrbracket \in \correttvar{0}{n}$, with $x_i \neq x_j$ if $i \neq j$, we define the $H$-mass of $R$ as
\begin{equation*}
    \massa_H (R)
 \vcentcolon =
  \sum_{i=1}^M H(\theta_i)
  \text{.}
\end{equation*}
Let $R \in \corrett$ and let $\{ P_j \}_j \subset \corrpol$ be a sequence such that $\normaflat(R- P_j) \longrightarrow 0$ as $j \to + \infty$. Since $\varphi_{n-k}^{\perp}$ is a finite measure, from \cite[Prop. 2.6]{cdrms17} it follows that, up to a not relabeled subsequence, $\normaflat(\slice{R}{p_L}{y} - \slice{P_j}{p_L}{y}) \longrightarrow 0$ for $\varphi_{n-k}^{\perp} \otimes \haus^k$-a.e. $(L,y) \in \grasskn \times \rgrande^k$. By Lemma \ref{lemmaigpermasssaanisotropa} and using that $\massa_H$ is sequentially flat lower semicontinuous on $\correttvar{0}{n}$ we have
\begin{equation*}
    \begin{split}
    \massaanisotropa(R) & =
    \int_{\grasskn} {
\int_{L} {
\massa_H (\slice{R}{p_L}{y})
}
\, d \haus^k (y)
} \, d \varphi_{n-k}^{\perp} (L) \\
&
\leq
\int_{\grasskn} {
\int_{L} {
\liminf_{j \to + \infty}
\massa_H (\slice{P_j}{p_L}{y})
}
\, d \haus^k (y)
} \, d \varphi_{n-k}^{\perp} (L)
\text{.}
\end{split}
\end{equation*}
Therefore, using Fatou's lemma, we have
\begin{equation*}
    \begin{split}
        \massaanisotropa(R) &
        \leq
\liminf_{j \to + \infty}
\int_{\grasskn} {
\int_{L} {
\massa_H (\slice{P_j}{p_L}{y})
}
\, d \haus^k (y)
} \, d \varphi_{n-k}^{\perp} (L) \\
&
=
\liminf_{j \to + \infty} \massaanisotropa (P_j)
\text{.}
    \end{split}
\end{equation*}
Passing to the infimum over the set of sequences $\{ P_j \}_j$ we get $
\massaanisotropa \leq \Phi_{H,\sigma_k}$ on $\corrett$.

\subsection{Proof of $\massaanisotropa \geq \Phi_{H,\sigma_k}$ on $\corrett$}

To prove this inequality we need some preliminaries.

\begin{lemma}\label{lemmadensità}
Let $R = \llbracket E, \tau, \theta \rrbracket \in \corrett$. Then, for $\haus^k$-a.e. $x \in E$ it holds
    $$
    \lim_{r \to 0^+} \frac{\normaflat(R \mrestr B(x,r) - S_{x, r} ) }{\massa(R \mrestr B(x,r))} = 0 \text{,}
    $$
    where $S_{x,r} \vcentcolon = \llbracket B(x, r) \cap \pi_{\tau(x)} , \tau(x), \theta(x) \rrbracket$ and $\pi_{\tau(x)}$ is the $k$-dimensional affine space through $x$ generated by the simple $k$-vector $\tau(x)$.
\end{lemma}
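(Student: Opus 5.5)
We select good points $x$ using three standard facts from rectifiability theory, and then estimate the flat norm by splitting $R\mrestr B(x,r) - S_{x,r}$ into a multiplicity error and a geometric error.

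\textbf{Choice of good points.} We take $x \in E$ such that:
\begin{itemize}
\item $x$ is a Lebesgue point of $\theta$ with respect to $\haus^k \mrestr E$, and $\theta(x)\neq 0$ (points with $\theta=0$ can be discarded from $E$);
\item $E$ has density one at $x$, i.e.\ $\haus^k(E\cap B(x,r))/(\omega_k r^k)\to 1$;
\item $E$ has approximate tangent plane $\pi_{\tau(x)}$ at $x$, and $x$ is a Lebesgue point of $\tau$.
\end{itemize}
Each of these properties holds $\haus^k$-a.e.\ on a rectifiable set. At such points the denominator satisfies
$$\massa(R\mrestr B(x,r)) = \int_{E\cap B(x,r)}|\theta| \, d\haus^k \sim |\theta(x)|\,\omega_k r^k.$$
So it suffices to show that $\normaflat(R\mrestr B(x,r) - S_{x,r}) = o(r^k)$.

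\textbf{Splitting.} Write
$$R\mrestr B(x,r) - S_{x,r} = \bigl(R\mrestr B(x,r) - \llbracket E\cap B(x,r),\tau,\theta(x)\rrbracket\bigr) + \bigl(\llbracket E\cap B(x,r),\tau,\theta(x)\rrbracket - S_{x,r}\bigr).$$
The first term is bounded in mass by $\int_{E\cap B(x,r)}|\theta-\theta(x)|\,d\haus^k = o(r^k)$, by the Lebesgue point property. For the second term we may normalize to $\theta(x)=1$.

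\textbf{Rescaling.} Let $\eta_{x,r}(y)=(y-x)/r$. The flat norm of a $k$-current scales as follows: if we split $T = A + \partial B$, the $k$-mass part scales like $r^k$ and the $(k+1)$-mass part like $r^{k+1}\le r^k$ for $r\le1$. Hence it suffices to show that the push-forwards
$$T_r := \eta_{x,r\#}\bigl(\llbracket E\cap B(x,r),\tau,1\rrbracket\bigr)$$
converge in flat norm to $\llbracket B(0,1)\cap\pi_0,\tau(x),1\rrbracket$, where $\pi_0$ is the linear plane spanned by $\tau(x)$.

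The rectifiable tangent-plane theory gives the following at a.e.\ point:
\begin{itemize}
\item the measures $\haus^k\mrestr \eta_{x,r}(E)$ converge weakly-$*$ to $\haus^k\mrestr\pi_0$, with no mass loss on $B(0,1)$, by the density-one property;
\item the orientations converge in $L^1$, since $\tau$ is approximately continuous at $x$.
\end{itemize}
Together these give weak convergence of $T_r$ as currents, with masses converging to the mass of the limit.

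\textbf{Main obstacle.} The main obstacle is upgrading this weak convergence to flat convergence, because the $T_r$ are not normal: their boundaries are uncontrolled. I would first try to obtain the flat estimate directly instead of invoking compactness. Choose a compact piece $K\subset E$ contained in a $C^1$ (or Lipschitz) graph $\Gamma$ over $\pi_{\tau(x)}$ with $x$ a density point of $K$, which is possible by rectifiability. Then:
\begin{itemize}
\item the part $E\setminus K$ contributes mass $o(r^k)$;
\item on $K$, compare with $\llbracket\Gamma\cap B(x,r)\rrbracket$, again at cost $o(r^k)$ in mass;
\item compare $\llbracket \Gamma\cap B(x,r)\rrbracket$ with the flat disc $S_{x,r}$ using the straight-line homotopy $h(t,y)=(1-t)\,p(y)+t\,y$, where $p$ is the projection onto $\pi_{\tau(x)}$. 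The homotopy formula writes the difference as $\partial(h_\#(\llbracket0,1\rrbracket\times\cdot)) + h_\#(\llbracket0,1\rrbracket\times\partial\cdot)$. Because the graph has slope $o(1)$ and height $o(r)$ near $x$, both terms have mass $o(r^k)$; the boundary term is handled by slicing the ball radius, choosing a good $r'\in(r,2r)$.
\end{itemize}
A final mismatch comes from the projected domain $p(\Gamma\cap B(x,r))$ versus the disc, which differ on a set of $\haus^k$-measure $o(r^k)$. Collecting all terms gives the claim.
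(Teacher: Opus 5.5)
\paragraph{Comparison with the paper.} The paper gives no argument of its own for this lemma. It cites \cite[Cor.~4.2]{cdrms17}, which is legitimate because the statement involves neither $\sigma_k$ nor $H$.

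Your proposal is a self-contained proof along standard lines:
\begin{itemize}
\item localize to a $C^1$ graph $\Gamma$ tangent to $\pi_{\tau(x)}$ at $x$;
\item freeze the multiplicity at $\theta(x)$;
\item push $\Gamma\cap B(x,r)$ onto the tangent plane with the straight-line homotopy. The $(k+1)$-dimensional term has mass $o(r)\cdot O(r^k)$, the boundary term has mass $o(r)\cdot O(r^{k-1})$, and $p(\Gamma\cap B(x,r))$ differs from the disc by $o(r^k)$.
\end{itemize}
This core is correct. You are also right not to try to upgrade weak convergence of the blow-ups. In fact the rescaling and weak-convergence paragraph can be dropped: the direct estimate already gives $\normaflat(R\mrestr B(x,r)-S_{x,r})=o(r^k)$, while $\massa(R\mrestr B(x,r))\ge c\,|\theta(x)|\,r^k$. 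The citation buys brevity. Your route makes explicit that the lemma is a purely local statement about rectifiable currents, independent of the anisotropic setting.

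\paragraph{A step that fails as written.} In $\mathbf{R}_k(\rgrande^n)$ only $\theta\in L^1(\haus^k\mrestr E)$ is assumed, so $\haus^k\mrestr E$ need not be locally finite. Take $R=\sum_j 2^{-j}\llbracket \ell_j\rrbracket$, with unit segments $\ell_j$ in distinct directions centered at a dense sequence in $\rgrande^2$. Then $\haus^1(E\cap B(x,r))=+\infty$ for every ball, as typically happens for transport paths irrigating a diffuse measure. At such points:
\begin{itemize}
\item ``$E$ has density one'' and ``Lebesgue point of $\theta$ with respect to $\haus^k\mrestr E$'' make no sense;
\item your intermediate current $\llbracket E\cap B(x,r),\tau,\theta(x)\rrbracket$ has infinite mass, so the first term of your splitting is not $o(r^k)$;
\item likewise, ``$E\setminus K$ contributes $o(r^k)$'' is false at multiplicity $\theta(x)$.
\end{itemize}

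\paragraph{Repair.} Localize \emph{before} freezing the multiplicity, and measure densities with the finite measure $\|R\|=|\theta|\haus^k\mrestr E$.
\begin{enumerate}
\item Cover $E$, up to a null set, by countably many $C^1$ graphs $\Gamma$.
\item For $\haus^k$-a.e.\ $x\in E\cap\Gamma$ one has $\|R\|(B(x,r)\setminus\Gamma)=o(r^k)$. This follows from the upper density theorem applied to $\|R\|\mrestr(\rgrande^n\setminus\Gamma)$, which gives no mass to $\Gamma$.
\item Orient $\Gamma$ continuously and absorb the sign of $\tau$ into $\theta$. Then $\int_{\Gamma\cap B(x,r)}|\theta\mathbf 1_E-\theta(x)|\,d\haus^k=o(r^k)$ at Lebesgue points for the Radon measure $\haus^k\mrestr\Gamma$.
\end{enumerate}
Hence $\massa\bigl(R\mrestr B(x,r)-\theta(x)\llbracket \Gamma\cap B(x,r)\rrbracket\bigr)=o(r^k)$, and your homotopy step finishes the proof.

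\paragraph{Two smaller points.}
\begin{itemize}
\item The slope is $o(1)$ only for a $C^1$ graph, so drop the ``or Lipschitz'' option.
\item No good radius is needed. When $\Gamma$ is $C^1$ and tangent to $\pi_{\tau(x)}$ at $x$, one has $\haus^{k-1}(\Gamma\cap\partial B(x,r))\le Cr^{k-1}$ for \emph{every} small $r$. Choosing $r'\in(r,2r)$ would not suffice anyway: the lemma concerns all $r\to0^+$, and passing from $r'$ back to $r$ costs $O(r^k)$, not $o(r^k)$.
\end{itemize}
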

\begin{proof}
    See \cite[Cor. 4.2]{cdrms17}.
\end{proof}

\begin{proposition}\label{secondainclusione}
    Let $R \in \corrett$ be such that $\massaanisotropa (R) < + \infty$. Then for every $\epsilon >0$ there exists $P \in \corrpol$ such that
        \begin{equation*}
        \normaflat(R - P) \leq \epsilon
        \qquad
        \text{and}
        \qquad
        \massaanisotropa(P) \leq \massaanisotropa (R) + \epsilon
        \text{.}
        \end{equation*}
\end{proposition}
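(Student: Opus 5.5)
The approach is the blow-up and covering construction of \cite{cdrms17} (the isotropic case), with the Euclidean density replaced by $\norma{\tau}_{\sigma_k}$. We first reduce to a well-behaved current. Since $\massaanisotropa(R)<+\infty$ and $\sigma_k$ is bounded below by a positive constant on the compact set $\grasskn$, we have $\int_E H(\theta)\,d\haus^k<+\infty$. We may also assume $\massa(R)<+\infty$: otherwise we truncate the multiplicity or argue locally, as in \cite{cdrms17}. Let $\mu:=H(\theta)\norma{\tau}_{\sigma_k}\haus^k\mrestr E$, a finite measure. Fix $\eta>0$. By Lusin's theorem and the continuity of $\sigma_k$, there is a compact set $K\subset E$ with $\mu(E\setminus K)$ and $\massa(R\mrestr(E\setminus K))$ both smaller than $\eta$, on which $\tau$ and $\theta$ are continuous. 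We also require that the conclusion of Lemma~\ref{lemmadensità} holds at every $x\in K$, and that $x$ is a Lebesgue point of $H(\theta)\norma{\tau}_{\sigma_k}$ with $\haus^k(E\cap B(x,r))/(\omega_k r^k)\to 1$.

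Next comes the Vitali step. For $x\in K$ and $r$ small, Lemma~\ref{lemmadensità} gives $\normaflat(R\mrestr B(x,r)-S_{x,r})\le \eta\,\massa(R\mrestr B(x,r))$. By the density properties,
$$
\massaanisotropa(S_{x,r})=H(\theta(x))\,\sigma_k(L_{\tau(x)})\,\omega_k r^k\le \mu(B(x,r))+\eta\,\omega_k r^k .
$$
Apply the Vitali covering theorem for Radon measures to the fine family of such balls. This yields finitely many disjoint balls $B_i=B(x_i,r_i)$ covering $\haus^k\mrestr K$ up to a set whose $\mu$-measure and mass are below $\eta$. Each $S_{x_i,r_i}$ is a flat $k$-disk, which we replace by an inscribed polyhedral $k$-current with the same constant multiplicity and orientation. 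This changes both flat norm and $\massaanisotropa$ by an arbitrarily small relative amount, since $\massaanisotropa$ of a flat piece is $H(\theta_i)\sigma_k(L_{\tau_i})$ times area.

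The remaining piece $R':=R\mrestr\bigl(\rgrande^n\setminus\bigcup_i B_i\bigr)$ has small mass and small $\mu$-measure, and it only needs to be approximated in flat norm by a polyhedral current with small $\massaanisotropa$. This is the main obstacle, because $H$ is merely subadditive and lower semicontinuous: a polyhedral flat-approximation of small mass could carry small multiplicities, where $H(\theta)$ may be large relative to $\theta$. We handle it with the same trick as in \cite{cdrms17}: $R'$ itself has small flat norm, $\normaflat(R')\le\massa(R')<\eta$, so we simply drop it, i.e.\ approximate it by $0$. Thus we set $P:=\sum_i P_i$. Then
$$
\normaflat(R-P)\le \sum_i\normaflat(R\mrestr B_i-P_i)+\massa(R')\lesssim \eta\,\massa(R)+\eta ,
$$
using Lemma~\ref{lemmadensità}. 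On the energy side,
$$
\massaanisotropa(P)\le\sum_i\bigl(\mu(B_i)+C\eta\, r_i^k\bigr)\le \massaanisotropa(R)+C\eta\bigl(\massa(R)+1\bigr),
$$
where we bound $\sum_i r_i^k$ by $\haus^k(E)$ via the density condition. We should be careful here: $\haus^k(E)$ may be infinite if $\theta$ is small on a large set. In that case we restrict $K$ to $\{|\theta|>\delta\}$. Since $\lim_{y\to0^+}H(y)/y$ is not assumed here, we instead absorb the error term relatively, as $\eta\,\mu(B_i)$, which is possible wherever $H(\theta(x))>0$. Where $H(\theta)=0$ the multiplicity is $0$ a.e., because a branching function with finite energy forces this only if $H>0$ off $0$. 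If not, those pieces cost nothing and the polyhedral disks also cost nothing. Finally we choose $\eta$ small in terms of $\epsilon$.

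Nonetheless the key subtlety is that the relaxation inequality uses only the definition of $\massaanisotropa$ on flat pieces. No integral geometric representation is needed for this direction.
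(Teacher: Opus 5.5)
Your proposal is correct and follows essentially the same route as the paper. The ingredients match: the flat density estimate on small balls (the density lemma from \cite{cdrms17}); a blow-up/Lebesgue-point bound $\massaanisotropa(S_{x,r})\le(1+O(\eta))\,\nu(B(x,r))$ with relative error, trivial where $H(\theta(x))=0$; a Vitali--Besicovitch selection of finitely many disjoint balls; replacement of each disk by an inscribed polyhedral chain; and dropping the remainder, whose flat norm is bounded by its small mass.

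Your Lusin step and the restriction to $\{|\theta|>\delta\}$ are harmless extra technicalities. The paper gets the energy bound directly from the blow-up of the finite rectifiable measure $\nu=H(\theta)\norma{\tau}_{\sigma_k}\haus^k\mrestr E$, and $\massa(R)<+\infty$ is already built into the definition of $\corrett$.
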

\begin{proof}
    We follow the argument in the isotropic case \cite[Prop. 2.7]{cdrms17}. We introduce the measure
    \begin{equation}\label{eqdefinizionemu}
    \mu \vcentcolon = \theta \haus^k \mrestr E
    \end{equation}
    associated with $R = \llbracket E, \tau, \theta \rrbracket$ and the positive finite measure
    $$
    \nu \vcentcolon= H(\theta) \norma{\tau}_{\sigma_k} \, \haus^k \mrestr E \text{.}
    $$
    We remark that
    $
    \massa (R) = \valass{\mu} (\rgrande^n)$ and $\massaanisotropa (R) = \nu (\rgrande^n)
    $.
    For $x \in \rgrande^n$, $\rho >0$, we define the $k$-current
    $$
    S_{x,\rho} \vcentcolon = \llbracket B(x, \rho) \cap (T_{x} (E) + x) , \tau(x) , \theta (x) \rrbracket
    \text{.}
    $$
    We have the following facts.
    \begin{enumerate}
        \item\label{enum1} By Lemma \ref{lemmadensità}, there exists a set $N_1 \subset E$ such that $\haus^k(N_1) = 0 $ and
        \begin{equation}\label{eq4}
        \lim_{\rho \to 0^+} \frac{\normaflat(R \mrestr B(x,\rho) - S_{x,\rho})}{\massa(R \mrestr B(x,\rho))} = 0
        \end{equation}
        for every $x \in E \setminus N_1$.
        In particular, if we fix $\eta >0$, for every $x \in E \setminus N_1$ there exists a radius $r (x) >0$ such that for every $\rho < r (x)$
        \begin{equation}\label{eqstimanormaflatgenerica}
        \normaflat \left( R \mrestr B(x, \rho) - S_{x, \rho} \right) \leq \eta \massa (R \mrestr B(x,\rho)) = \eta \valass{\mu} (B(x, \rho))
        \text{.}
        \end{equation}
        \item\label{enum2} Since $\massaanisotropa (R) < + \infty$, we have $H(\theta(\cdot))\norma{\tau(\cdot)}_{\sigma_k} \in L^1_{\haus^k}(E)$ and we can consider the set $L$ of its Lebesgue points. Choose $x \in L$ and let $\rho > 0$. We consider the map $\eta_{x,\rho} : \rgrande^n \to \rgrande^n$ defined by $\eta_{x,\rho} (y) \vcentcolon = \frac{y - x}{\rho}$ and the measure
        \begin{equation*}
        \begin{split}
        \nu_{x,\rho} &
        \vcentcolon=
        \frac{1}{\rho^k} (\eta_{x,\rho})_{\#} \nu
        \text{.}
        \end{split}
        \end{equation*}
        In particular we have
    \begin{equation}\label{eqcambiodivariabilinu}
            \nu_{x,\rho} (B(0,1)) =
            \frac{1}{\rho^k} \nu (B(x,\rho))
            \text{.}
        \end{equation}
        Since $E$ is countably $(\haus^k,k)$-rectifiable, for $\haus^k$-a.e. $x \in L$ there exists a $k$-dimensional linear subspace of $\rgrande^n$, denoted $T_x (E)$, such that
        $$
        \nu_{x,\rho} \rightharpoonup H(\theta(x)) \norma{\tau(x)}_{\sigma_k} \, \haus^k \mrestr T_x (E)
        $$
        as $\rho \to 0^+$.
        In particular, testing with the function $ \chi_{B(0,1)}$, we have that
        \begin{equation}\label{eqprimaconvdimisure}
        \begin{split}
        \nu_{x,\rho} (B(0,1)) &
        \xrightarrow[]{\rho \to 0^+} H(\theta (x)) \norma{\tau(x)}_{\sigma_k} \, \omega_k \quad \text{for $\haus^k$-a.e. $x \in E$.}
        \end{split}
        \end{equation}
        (We can indeed test with the function $ \chi_{B(0,1)}$, since
$\haus^k\bigl(T_x(E)\cap \partial B(0,1)\bigr)=0$.)
        This means that there exists a set $N_2 \subset E$ such that $\haus^k(N_2) = 0 $, $E \setminus N_2 \subset L$ and \eqref{eqprimaconvdimisure} holds for every $x \in E \setminus N_2$. Hence, if we fix $\eta >0$, we have that for every $x \in E \setminus N_2$ there exists a radius $\rho(x) >0$ such that for every $\rho < \rho(x)$
        \begin{equation}\label{eq1}
        \begin{split}
        \valass{\nu_{x,\rho}(B(0,1)) - H(\theta (x)) \norma{\tau(x)}_{\sigma_k} \, \omega_k} &
        \leq
        \eta
        H(\theta (x)) \norma{\tau(x)}_{\sigma_k} \, \omega_k
        \text{,}
        \end{split}
        \end{equation}
        i.e., multiplying by $\rho^k$ and using equation \eqref{eqcambiodivariabilinu},
        \begin{equation}
            \valass{\nu(B(x,\rho)) - \massaanisotropa(S_{x,\rho})}
        \leq
        \eta
        \massaanisotropa(S_{x,\rho})
        \text{.}
        \end{equation}
        We have
        \begin{equation*}
            (1- \eta) \massaanisotropa (S_{x,\rho}) \leq
                \nu (B(x,\rho))
                \text{,}
        \end{equation*}
        hence
        \begin{equation}
            \valass{\nu(B(x,\rho)) - \massaanisotropa(S_{x,\rho})}
        \leq
        \frac{\eta}{1-\eta} \nu (B(x,\rho))
        \end{equation}
        and we conclude
        \begin{equation}\label{eqstimapermassaanisgenerica}
        \begin{split}
            \massaanisotropa (S_{x,\rho})
            &
            \leq \left( 1 + \frac{\eta}{1- \eta} \right) \nu (B(x,\rho)) \\
            &
            = \left( 1 + \frac{\eta}{1- \eta} \right) \massaanisotropa(R \mrestr B(x,\rho) )
            \text{.}
            \end{split}
        \end{equation}
    \end{enumerate}
    For every $x \in E \setminus (N_1 \cup N_2)$ we set
    \begin{equation*}
        R(x) \vcentcolon = \min \{ r(x), \rho (x) \}
        \text{.}
    \end{equation*}
    Now, using the Vitali-Besicovitch covering theorem on $\bigcup_{x \in E \setminus (N_1 \cup N_2)} B(x, R(x))$, we find a finite family $\{ B_i \vcentcolon = B(x_i,r_i) \}_{i =1}^{\indice}$ of pairwise disjoint balls centered at points $x_i \in E \setminus (N_1 \cup N_2 )$ such that for every $i = 1, \ldots ,\indice$
     \begin{enumerate}
        \item[(a)] $r_i < \eta$ and
        \begin{equation}\label{eqmisuravitalibesicovitch}
        \valass{\mu} \left( \rgrande^n \setminus (\bigcup_{i=1}^{\indice} B_i) \right) = \massa(R \mrestr (\rgrande^n \setminus \bigcup_{i=1}^{\indice} B_i)) \leq \eta
        \text{;}
        \end{equation}
        \item[(b)] $B_i \subset B(x_i, R(x_i))$, so that by equations \eqref{eqstimanormaflatgenerica} and \eqref{eqstimapermassaanisgenerica}
        \begin{equation}\label{eqstimanormaflat1}
        \normaflat \left( R \mrestr B_i - S_i \right) \leq \eta \massa (R \mrestr B_i) = \eta \valass{\mu} (B_i)
        \text{.}
        \end{equation}
        and
        \begin{equation}\label{eqstimamassaanisotropa2}
        \massaanisotropa (S_i) \leq
        \left( 1 + \frac{\eta}{1- \eta} \right) \massaanisotropa(R \mrestr B_i )
        \text{,}
        \end{equation}
        where
        $$
        S_i = S_{x_i, r_i}= \llbracket B_i \cap (T_{x_i} (E) + x_i) , \tau(x_i) , \theta(x_i) \rrbracket \in \corrett
        \text{.}
        $$
    \end{enumerate}
    Now we consider the $k$-current
    \begin{equation*}
    S \vcentcolon = \sum_{i=1}^{\indice} S_i
    \end{equation*}
    supported on $k$-dimensional pairwise disjoint disks and of constant multiplicity and tangent $k$-vector on every disk. Let us approximate the support of $S$ with $k$-dimensional simplexes contained in $\supp S$. More precisely, we can pick a polyhedral $k$-current $P$ such that
    \begin{equation}\label{eqstimanormaflat2}
    \normaflat(P \mrestr B_i - S_i ) \leq \eta \valass{\mu}(B_i) \qquad \forall i = 1, \ldots, \indice
    \text{,}
    \end{equation} and
    \begin{equation}\label{eqstimamassaanisotropa1}
        \massaanisotropa (P \mrestr B_i) \leq \massaanisotropa (S_i) \qquad \forall i = 1, \ldots, \indice
        \text{.}
    \end{equation}
    \begin{itemize}
        \item Using the fact that if $T \in \corrett$ then $\normaflat(T) \leq \massa(T)$ and equations \eqref{eqmisuravitalibesicovitch}, \eqref{eqstimanormaflat1} and \eqref{eqstimanormaflat2}:
            \begin{equation*}
                \begin{split}
                    \normaflat(R - P) &
                    \leq \sum_{i=1}^{\indice} \normaflat (R \mrestr B_i - P \mrestr B_i) + \normaflat (R \mrestr (\rgrande^n \setminus \bigcup_{i=1}^{\indice} B_i)) \\
                    &
                    \leq
                    \sum_{i=1}^{\indice} \normaflat (R \mrestr B_i - S_i) + \sum_{i=1}^{\indice} \normaflat (S_i - P \mrestr B_i) + \massa (R \mrestr (\rgrande^n \setminus \bigcup_{i=1}^{\indice} B_i)) \\
                    &
                    \leq
                    2 \eta \sum_{i=1}^{\indice} \valass{\mu} (B_i) + \eta \\
                    &
                    \leq
                    2 \eta \massa(R) + \eta
                    \text{;}
                \end{split}
            \end{equation*}
        \item by equations \eqref{eqstimamassaanisotropa1} and \eqref{eqstimamassaanisotropa2}
        \begin{equation*}
            \begin{split}
                \massaanisotropa (P) &
                = \sum_{i=1}^{\indice} \massaanisotropa (P \mrestr B_i) \\
                &
                \leq
                \sum_{i=1}^{\indice} \massaanisotropa (S_i) \\
                &
                \leq
                \left( 1 + \frac{\eta}{1- \eta} \right)\sum_{i=1}^{\indice}
                \massaanisotropa(R \mrestr B_i ) \\
                &
                \leq
                \left( 1 + \frac{\eta}{1- \eta} \right) \massaanisotropa (R)
                 \text{.}
            \end{split}
        \end{equation*}
    \end{itemize}
Fix now the tolerance $\epsilon>0$ in the statement and perform the above construction with an auxiliary parameter $\eta\in (0,1/2)$. The estimates obtained above read
$$
\normaflat(R-P)\le (2\massa(R)+1)\eta
$$
and
$$
\massaanisotropa(P)\le \left(1+\frac{\eta}{1-\eta}\right)\massaanisotropa(R)
= \massaanisotropa(R)+\frac{\eta}{1-\eta}\massaanisotropa(R).
$$
Since $\massaanisotropa(R)<+\infty$, we can choose $\eta$ so small that
$$
(2\massa(R)+1)\eta\le \epsilon
\qquad\text{and}\qquad
\frac{\eta}{1-\eta}\massaanisotropa(R)\le \epsilon.
$$
For this choice of $\eta$ we obtain
$$
\normaflat(R-P)\le \epsilon
\qquad\text{and}\qquad
\massaanisotropa(P)\le \massaanisotropa(R)+\epsilon,
$$
which proves the claim.
\end{proof}

We are now in the position to conclude the proof of the inequality. Let $R \in \corrett$ and let $\epsilon = \frac{1}{m}$, $m \geq 1$. By Proposition \ref{secondainclusione} we can construct a sequence $\{P_m \}_{m \in \ngrande \setminus \{ 0 \}} \subset \corrpol$ such that $\normaflat(R - P_m) \xrightarrow[]{m \to + \infty} 0$ and $\massaanisotropa (P_m) \leq \massaanisotropa (R) + \frac{1}{m}$. By definition \eqref{eqdefrilassato} of $\rilassato$, we have
$$
\rilassato(R) \leq \liminf_{m \to + \infty} \massaanisotropa(P_m) \leq \massaanisotropa (R)
\text{.}
$$
\qed

\section{Existence of minimizers}\label{sec:min}
We begin with a few preliminary results that allow us to reduce problem \eqref{abotunon} to a locally flat compact class of competitors.

\begin{rem}\label{remsullinf}
The class $\mathcal C$ of competitors for problem \eqref{abotunon} is nonempty.
This follows from the standard approximation procedure used in branched transport; see for instance \cite[Prop.~3.1]{xia03}. More precisely, one approximates $\mu^-$ and $\mu^+$ by sequences of atomic measures
$(\mu_h^-)_{h}$ and $(\mu_h^+)_{h}$ with equal total mass, constructs a polyhedral current $P_1$
with boundary $\mu_1^+-\mu_1^-$ and then adds correction currents joining, for every $h\geq 1$ $\mu_h^-$ to $\mu_{h+1}^-$ and $\mu_h^+$ to $\mu_{h+1}^+$. In this way one obtains a summable series of polyhedral currents, whose sum is a rectifiable current
$R\in \mathbf R^1(\rgrande^n)$ satisfying
$$
\partial R=\mu^+-\mu^-.
$$

In general, however, the value of problem \eqref{abotunon} may be $+\infty$. Hence, in the existence theorem below, one has to assume
$$
\inf\{\massaanisotropauno(R):R\in\mathcal C\}<+\infty.
$$
\end{rem}

\subsection{Compactness of minimizing sequences}

\begin{lemma}\label{lemmaaciclica}
     Let $\sigma_k$ be a $k$-anisotropy in $\rgrande^n$ and let $H$ be a branching function non-decreasing on $\rgrande^+$. Let $R \in \correttvar{k}{n}$ and let $C \in \correttvar{k}{n}$ be a nontrivial cyclic subcurrent of $R$. Then
     $$
     \massaanisotropa(R-C) \leq \massaanisotropa (R).
     $$
\end{lemma}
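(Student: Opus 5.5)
The plan is to write both currents with a common rectifiable support and compare their multiplicities pointwise. Write $R=\llbracket E,\tau,\theta\rrbracket$ and $C=\llbracket E_C,\tau_C,\theta_C\rrbracket$. The subcurrent condition $\massa(R)=\massa(C)+\massa(R-C)$ will be turned into two pointwise facts, holding $\haus^k$-a.e.:
\begin{itemize}
\item $C$ is concentrated on $E$;
\item wherever $\theta_C\neq 0$, after possibly changing the sign of $\theta_C$ we may take $\tau_C=\tau$, and then $\theta_C$ has the same sign as $\theta$ with $|\theta_C|\le|\theta|$.
\end{itemize}

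First, I will represent all three currents $R$, $C$ and $R-C$ on the rectifiable set $E\cup E_C$ with the single orientation $\tau$, extended arbitrarily (but measurably) on $E_C\setminus E$. At $\haus^k$-a.e. point the approximate tangent spaces of $E$ and $E_C$ coincide, so $\tau_C=\pm\tau$ there, and we may absorb the sign into $\theta_C$. The multiplicities are then $\theta$, $\theta_C$ and $\theta-\theta_C$, where $\theta:=0$ off $E$. Mass is additive over this common representation, so
\[
\massa(R)=\int_{E\cup E_C}|\theta|\,d\haus^k,\qquad \massa(C)+\massa(R-C)=\int_{E\cup E_C}\bigl(|\theta_C|+|\theta-\theta_C|\bigr)\,d\haus^k .
\]
By the triangle inequality $|\theta|\le|\theta_C|+|\theta-\theta_C|$ pointwise. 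Equality of the integrals therefore forces
\[
|\theta_C|+|\theta-\theta_C|=|\theta| \quad \haus^k\text{-a.e.}
\]
Equality in the triangle inequality for real numbers means $\theta_C$ and $\theta-\theta_C$ have the same sign, which gives $0\le \theta_C/\theta\le 1$ where $\theta\neq0$, and $\theta_C=0$ where $\theta=0$. Hence $|\theta-\theta_C|\le|\theta|$ a.e., and $\theta-\theta_C$ vanishes wherever $\theta$ does.

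Second, I will conclude with the monotonicity of $H$. Since $H$ is even and non-decreasing on $\rgrande^+$, we get $H(\theta-\theta_C)=H(|\theta-\theta_C|)\le H(|\theta|)=H(\theta)$ a.e., with both sides equal to $H(0)=0$ off $E$. The factor $\norma{\tau}_{\sigma_k}$ is the same for both currents, because $\norma{\cdot}_{\sigma_k}$ is even: $\sigma_k$ depends only on the unoriented plane. Integrating the pointwise inequality against $\norma{\tau}_{\sigma_k}\,d\haus^k$ gives $\massaanisotropa(R-C)\le\massaanisotropa(R)$. The cycle condition $\partial C=0$ and the nontriviality of $C$ play no role beyond $C$ being a subcurrent.

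The main obstacle is the first step: justifying the common representation with a single orientation, and the measure-theoretic fact that two $k$-rectifiable sets have the same approximate tangent plane at $\haus^k$-a.e. point of their intersection. I would cite the standard locality of approximate tangent spaces for this. One must also handle the case $\massa(R)=+\infty$, since the integral identity above is only meaningful when the masses are finite. If $\massa(R)=+\infty$ I would argue directly that the subcurrent relation still forces the pointwise equality, for instance by applying it to restrictions $\mrestr B$ to bounded sets of finite mass; alternatively, I would simply note that rectifiable currents are assumed to have finite mass here.
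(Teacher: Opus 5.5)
Your proposal is correct and is essentially the paper's proof. Both arguments write $C=\llbracket E,\tau,\lambda\theta\rrbracket$ with $\lambda\in[0,1]$, so that $R-C$ has multiplicity $(1-\lambda)\theta$, and then conclude by using that $H$ is even and non-decreasing on $\mathbb{R}^+$ and integrating against $\|\tau\|_{\sigma_k}\,d\mathcal{H}^k$. The one difference is that the paper quotes this characterization of subcurrents as standard, whereas you derive it from $\mathbb{M}(R)=\mathbb{M}(C)+\mathbb{M}(R-C)$ via the equality case of the triangle inequality. In that derivation, take the extension of $\tau$ on $E_C\setminus E$ to be $\tau_C$ rather than arbitrary, so that $\tau_C=\pm\tau$ holds there as well.
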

\begin{proof}
Write
$$
R=\llbracket E,\tau,\theta\rrbracket.
$$
Since $C$ is a rectifiable subcurrent of $R$, by the standard characterization of subcurrents of rectifiable currents there exists a Borel function $\lambda:E\to [0,1]$ such that
$$
C=\llbracket E,\tau,\lambda\theta\rrbracket.
$$
Therefore
$$
R-C=\llbracket E,\tau,(1-\lambda)\theta\rrbracket.
$$
Since $H$ is even and non-decreasing on $\rgrande^+$, we have
$$
H((1-\lambda(x))|\theta(x)|)\le H(|\theta(x)|)
\qquad\text{for $\haus^k$-a.e. }x\in E.
$$
Multiplying by $\norma{\tau(x)}_{\sigma_k}$ and integrating on $E$, we obtain
$$
\massaanisotropa(R-C)\le \massaanisotropa(R).
$$
\end{proof}

\begin{rem}\label{remaciclicita}
    \begin{enumerate}
        \item Setting
$$
\mathcal{AC}_1 \vcentcolon = \{R \in \correttvar{1}{n}: \partial R = \mu^+ - \mu^- ,\ R \text{ acyclic}
\}
\text{,}
$$
by Proposition 3.8 of \cite{paolini_stepanov_2012}, every normal $1$-current $T$ contains a cycle $C$ such that $T-C$ is acyclic.
Hence, for every $R\in\mathcal C$, there exists a cyclic subcurrent $C$ of $R$ such that $R-C\in\mathcal{AC}_1$ and
$$
\partial(R-C)=\partial R=\mu^+-\mu^-.
$$
Applying Lemma \ref{lemmaaciclica}, we obtain
\begin{equation}\label{eq:uguaglianzainf}
\inf_{R \in \mathcal{C}}  \massaanisotropauno(R) = \inf_{R \in \mathcal{AC}_1}  \massaanisotropauno(R)
\text{.}
\end{equation}
        \item By Proposition 3.6 of \cite{cdm}, if $k = 1$, for every $R= \llbracket E,\tau,\theta \rrbracket \in\mathcal{A C}_1$ it holds $\norma{\theta}_{L^{\infty}} \leq \massa(\mu^+) = \massa (\mu^-)$.
    \end{enumerate}
\end{rem}
Since we will rely on the finiteness of $\norma{\theta}_{\infty}$, from now on we restrict to the case $k=1$. Moreover, throughout this section we assume that
\begin{equation}\label{eqderivatainfinita}
    \lim_{y \to 0^+} \frac{H(y)}{y} = + \infty
    \text{.}
\end{equation}

\begin{lemma}\label{lemmamassaequilim}
Let $\sigma_1$ be a $1$-anisotropy in $ \rgrande^n$ and let $H$ be a branching function, non-decreasing on $\rgrande^+$, satisfying condition \eqref{eqderivatainfinita}. Then there exists a constant $C = C(\sigma_1, \massa(\mu^+), H)$ such that for every $R \in \mathcal{AC}_1$
\begin{equation*}
    \massa (R) \leq C \, \massaanisotropauno(R)
    \text{.}
\end{equation*}
\end{lemma}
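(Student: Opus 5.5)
The strategy is to compare the integrands of $\massa(R)=\int_E|\theta|\,d\haus^1$ and $\massaanisotropauno(R)=\int_E H(\theta)\norma{\tau}_{\sigma_1}\,d\haus^1$ pointwise. This requires two ingredients: a lower bound on the anisotropy and a bound of the form $|\theta|\le c\,H(\theta)$ on the range of $\theta$ allowed for acyclic currents.

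First, since $\sigma_1$ is continuous and strictly positive on the compact space $\grass(1,n)$, we have
$$
m_\sigma\vcentcolon=\min_{\grass(1,n)}\sigma_1>0,
$$
so $\norma{\tau(x)}_{\sigma_1}\ge m_\sigma$ for $\haus^1$-a.e. $x\in E$, because $\tau$ is a unit vector.

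Second, let $R=\llbracket E,\tau,\theta\rrbracket\in\mathcal{AC}_1$. By Remark~\ref{remaciclicita}(2), $|\theta|\le \massa(\mu^+)=\vcentcolon m$ a.e. on $E$. We may assume $\theta\neq0$ a.e. on $E$, since points where $\theta=0$ contribute to neither functional. The key claim is
$$
c_H\vcentcolon=\sup_{0<y\le m}\frac{y}{H(y)}<+\infty .
$$
By \eqref{eqderivatainfinita} there is $\delta\in(0,m]$ with $H(y)\ge y$ on $(0,\delta]$, so $y/H(y)\le1$ there. On $[\delta,m]$, monotonicity of $H$ on $\rgrande^+$ gives $H(y)\ge H(\delta)$, hence $y/H(y)\le m/H(\delta)$.

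One still needs $H(\delta)>0$. This holds because $H(\delta)\ge\delta>0$ by the choice of $\delta$; more generally, $H>0$ on $(0,\delta]$ together with monotonicity shows $H>0$ on $(0,\infty)$. Therefore
$$
c_H\le\max\{1,m/H(\delta)\}.
$$
Using that $H$ is even, this gives $|\theta(x)|\le c_H H(\theta(x))$ for a.e. $x\in E$.

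Combining the two bounds,
$$
\massa(R)=\int_E|\theta|\,d\haus^1\le c_H\int_E H(\theta)\,d\haus^1\le \frac{c_H}{m_\sigma}\int_E H(\theta)\norma{\tau}_{\sigma_1}\,d\haus^1=\frac{c_H}{m_\sigma}\,\massaanisotropauno(R).
$$
Hence the lemma holds with $C=c_H/m_\sigma$, which depends only on $\sigma_1$, $\massa(\mu^+)$ and $H$. The only delicate step is ensuring $c_H$ is finite near both ends of $(0,m]$. Near $0$ this is exactly hypothesis \eqref{eqderivatainfinita}. Away from $0$ it follows from monotonicity of $H$ combined with the $L^\infty$ bound on $\theta$ coming from acyclicity, which is why the argument is restricted to $\mathcal{AC}_1$.
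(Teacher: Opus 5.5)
Your proof is correct and follows the paper's argument: you compare the integrands pointwise using $\min_{\grass(1,n)}\sigma_1>0$, the $L^\infty$ bound $|\theta|\le\massa(\mu^+)$ for acyclic currents, and the finiteness of $\sup_{0<y\le\massa(\mu^+)} y/H(y)$. The only minor difference is that you bound this supremum away from $0$ directly by monotonicity ($H(y)\ge H(\delta)\ge\delta>0$), whereas the paper uses upper semicontinuity of $y/H(y)$ on a compact interval; your version also makes explicit the positivity of $H$ on $(0,\massa(\mu^+)]$, which the paper leaves implicit.
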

\begin{proof}
If $R \in \mathcal{AC}_1$ we have
\begin{equation*}
\begin{split}
\massa(R) & =
\int_{E \cap \{ \theta \neq 0 \} } {\valass{\theta(x)}} \,d \haus^1(x)
=
\int_{E \cap \{ \theta \neq 0 \}} {
\frac{\valass{\theta(x)}}{H(\valass{\theta(x)})} H(\valass{\theta(x)})  \frac{\norma{\tau(x)}_{\sigma_1}}{\norma{\tau(x)}_{\sigma_1}}
} \,d \haus^1(x) \\
&
\leq
\left(\min_{\tau \in\mathbb{S}^{n-1}}\norma{\tau}_{\sigma_1}\right)^{-1}
\left( \sup_{0<y \leq \massa(\mu^+)}\frac{y}{H(y)} \right)
\int_{E \cap \{ \theta \neq 0 \}} {
H(\theta(x)) \norma{\tau(x)}_{\sigma_1}
} \, d \haus^1(x)
\\
&
\leq
C\,\massaanisotropauno(R) < + \infty
\text{,}
\end{split}
\end{equation*}
where in the second inequality we used Remark \ref{remaciclicita} and the fact that $H$ is even. Since $H$ is non-decreasing on $\rgrande^+$, the finiteness of the supremum $ \sup_{0<y \leq \massa(\mu^+)}\frac{y}{H(y)}$ follows from the fact that the function $\frac{y}{H(y)}$ is upper semicontinuous on an interval $[\epsilon, \massa(\mu^+)],$ and $\lim_{y \to 0^+} \frac{y}{H(y)} = 0$ by assumption \eqref{eqderivatainfinita}.
\end{proof}

\begin{lemma}\label{lemmacompattezzalocale}
Let $(T_i)_{i\in\mathbb{N}} \subset \mathbf N_k(\mathbb R^n)$ be such that
$$
\sup_i \bigl(\mathbb M(T_i)+\mathbb M(\partial T_i)\bigr)<+\infty.
$$
Then, there exists a current
$T\in \mathbf N_k(\mathbb R^n)$ such that, up to a non-relabeled subsequence,
$$
\mathbb F\bigl((T_i-T)\mrestr B_R\bigr)\to 0
\qquad \text{for a.e. } R>0.
$$
\end{lemma}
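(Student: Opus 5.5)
The plan is to combine the classical Federer--Fleming compactness theorem for normal currents supported in a fixed compact set with a slicing argument that localizes to balls, followed by a diagonal extraction.

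\emph{Step 1: a weak limit.} Set $M:=\sup_i(\massa(T_i)+\massa(\partial T_i))$. Since currents with mass bounded by $M$ form a weakly$^*$ sequentially compact family (Banach--Alaoglu on the dual of compactly supported continuous forms), up to a subsequence $T_i\rightharpoonup T$ and $\partial T_i\rightharpoonup \partial T$ weakly as currents. Lower semicontinuity of mass gives $\massa(T)+\massa(\partial T)\le M$, so $T\in\corrnorm$.

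\emph{Step 2: good radii.} For each $R>0$ consider the slices $\langle T_i,|\cdot|,R\rangle$. By the slicing inequality,
\[
\int_0^{\infty}\massa(\langle T_i,|\cdot|,r\rangle)\,dr\le \massa(T_i)\le M .
\]
Combined with $\partial(T_i\mrestr B_r)=(\partial T_i)\mrestr B_r+\langle T_i,|\cdot|,r\rangle$, this yields $\massa(T_i\mrestr B_r)+\massa(\partial(T_i\mrestr B_r))\le M+\massa(\langle T_i,|\cdot|,r\rangle)$. By Fatou, $\liminf_i\massa(\langle T_i,|\cdot|,r\rangle)<\infty$ for a.e. $r$. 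This is only a liminf, so the subsequence depends on $r$; handling that dependence is the main obstacle. I would deal with it by Fatou plus a diagonal argument over a countable dense set of radii. More robustly, for a fixed integer $N$ I would choose radii $r\in(N,N+1)$ with $\massa(\langle T_i,|\cdot|,r\rangle)\le 2M$ for infinitely many $i$. This is possible because the set of good $r$ has measure at least $1/2$ for each $i$, and a limsup of sets of measure at least $1/2$ has measure at least $1/2$. Along such a subsequence, $T_i\mrestr B_r$ are normal currents supported in $\overline{B}_{N+1}$ with uniformly bounded normal mass. The Federer--Fleming compactness theorem then gives flat convergence to some $T^N$. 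A diagonal extraction over $N$ yields a single subsequence along which, for every $N$, $T_i\mrestr B_{r_N}$ converges in flat norm. The limit $T^N$ must coincide with $T\mrestr B_{r_N}$ whenever $\massa(\langle T,|\cdot|,r_N\rangle)$ is controlled and $\|T\|(\partial B_{r_N})=0$, which is true for all but countably many radii.

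\emph{Step 3: all a.e. radii.} For a general $R$ not in the chosen countable family, I would pick $r_N>R$ and write $(T_i-T)\mrestr B_R=((T_i-T)\mrestr B_{r_N})\mrestr B_R$. A flat decomposition $A_i+\partial B_i$ of $(T_i-T)\mrestr B_{r_N}$ with small mass can be restricted to $B_R$ via the slicing formula $(\partial B_i)\mrestr B_R=\partial(B_i\mrestr B_R)-\langle B_i,|\cdot|,R\rangle$. Integrating over $R$ and using $\int\massa(\langle B_i,|\cdot|,R\rangle)\,dR\le\massa(B_i)\to0$ shows that $\normaflat((T_i-T)\mrestr B_R)\to0$ in $L^1(dR)$. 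After a further subsequence (diagonal over $N$), this convergence holds for a.e. $R$, which proves the lemma.
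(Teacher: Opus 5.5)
Your strategy works and is genuinely different from the paper's, but one claim in Step~2 is false as stated. A controlled $\massa(\langle T,|\cdot|,r_N\rangle)$ together with $\|T\|(\partial B_{r_N})=0$ does \emph{not} force the flat limit $T^N$ of $T_i\mrestr B_{r_N}$ to be $T\mrestr B_{r_N}$: mass of $T_i$ can pile up on $\partial B_{r_N}$ and cancel in the limit. For a counterexample, work in $\rgrande^2$, let $C_s$ be the counterclockwise circle $\partial B_s$, and set $T_i:=C_{r-1/i}-C_{r+1/i}$. Then $\partial T_i=0$, the masses are bounded, and $\normaflat(T_i)\le\leb^2(B_{r+1/i}\setminus B_{r-1/i})\to0$, so $T=0$. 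Moreover $\langle T_i,|\cdot|,r\rangle=0$ for all $i$, so $r$ passes your selection. Yet $T_i\mrestr B_r=C_{r-1/i}\to C_r\neq 0=T\mrestr B_r$ in flat norm. The repair is easy, and there are two options. First, you can also extract $\|T_i\|\stackrel{*}{\rightharpoonup}\mu$ and pick $r_N$ with $\mu(\partial B_{r_N})=0$; this is possible because your admissible radii form a set of positive measure. This is essentially the condition the paper builds into its set $G$, and its cutoff argument then gives $T_i\mrestr B_{r_N}\rightharpoonup T\mrestr B_{r_N}$. Second, you can skip the identification altogether. Testing with forms supported in the open ball $B_{r_N}$ gives $T^N\mrestr B_{r_N}=T\mrestr B_{r_N}$, hence $(T_i\mrestr B_{r_N}-T^N)\mrestr B_R=(T_i-T)\mrestr B_R$ for every $R<r_N$. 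So Step~3 can be run on a decomposition $T_i\mrestr B_{r_N}-T^N=A_i+\partial B_i$, since it only involves $R<r_N$.

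Here is how your route compares with the paper's. The paper extracts a subsequence with $\|T_i\|+\|\partial T_i\|\stackrel{*}{\rightharpoonup}\mu$. It calls $R$ good if $\mu(\partial B_R)=0$, $\|T\|(\partial B_R)=0$ and $\liminf_i\massa(\langle T_i,|\cdot|,R\rangle)<\infty$, and shows $T_i\mrestr B_R\rightharpoonup T\mrestr B_R$ for good $R$ via cutoffs. Then, radius by radius, it uses Federer--Fleming compactness plus uniqueness of weak limits to claim flat convergence of the whole subsequence for every good $R$. You instead control slices at a single radius per shell. You then propagate flat convergence from $B_{r_N}$ to almost every smaller ball by slicing the filling $B_i$, which gives $L^1(dR)$ convergence and, after one more extraction, a.e.\ convergence. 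The obstacle you isolated is real, and it is exactly where the paper's argument is loose. Its claim that from every subsequence one can extract a further one with $\sup_j\massa(\langle T_{i_j},|\cdot|,R\rangle)<\infty$ would need a finite $\limsup_i$, not $\liminf_i$, so as written it only produces an $R$-dependent subsequence. Your $L^1(dR)$ device avoids this. The price is an extra extraction, and you get ``a.e.\ $R$'' instead of ``every $R$ in an explicit set''. That is all the statement needs, and all its later use in Lemma~\ref{lemmacompattezza} and Theorem~\ref{thtoa} needs. You could even drop the selection of radii. For $\varphi\in C^\infty_c$ with $\varphi\equiv1$ on $B_r$, the bound $\massa(\partial(T_i\mrestr\varphi))\le\massa(\partial T_i)+\|d\varphi\|_\infty\massa(T_i)$ holds uniformly in $i$. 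Hence $(T_i-T)\mrestr\varphi\to0$ in flat norm along the whole weakly convergent sequence, and your Step~3 applies verbatim for $R<r$.
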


\begin{proof}
Set
$$
C:=\sup_i\bigl(\mathbb M(T_i)+\mathbb M(\partial T_i)\bigr)<+\infty.
$$

By the compactness theorem for normal currents, up to a subsequence we may assume
$$
T_i \rightharpoonup T
\qquad \text{weakly-$*$ as currents},
$$
for some $T\in \mathbf N_k(\mathbb R^n)$.
Moreover, if we define the finite Radon measures
$$
\mu_i:=\|T_i\|+\|\partial T_i\|,
$$
then, up to a further subsequence, we may also assume
$$
\mu_i \stackrel{*}{\rightharpoonup} \mu
\qquad \text{weakly-* as Radon measures}
$$
for some finite Radon measure $\mu$ on $\mathbb R^n$.

Let
$u(x):=|x|$. Since $u$ is $1$-Lipschitz, for each $i$ the slice
$\langle T_i,u,R\rangle$ is defined for a.e. $R>0$ and
$$
\int_0^\infty \mathbb M\bigl(\langle T_i,u,R\rangle\bigr)\,dR
\le\mathbb M(T_i)
\le C.
$$
Moreover Fatou's lemma yields
$$
\int_0^\infty \liminf_{i\to\infty}\mathbb M\bigl(\langle T_i,u,R\rangle\bigr)\,dR
\le C,
$$
hence
$$
\liminf_{i\to\infty}\mathbb M\bigl(\langle T_i,u,R\rangle\bigr)<+\infty
\qquad \text{for a.e. } R>0.
$$

Let $G\subset (0,\infty)$ be the set of radii $R$ such that:
\begin{enumerate}
\item $\mu(\partial B_R)=0$,
\item $\|T\|(\partial B_R)=0$,
\item $\liminf_i\mathbb M(\langle T_i,u,R\rangle)<+\infty$.
\end{enumerate}
Since each of these conditions holds for a.e. $R$, the set $G$ has full measure. Fix now $R\in G$. We claim that
\begin{equation}\label{e:claim}
T_i\mrestr B_R \rightharpoonup T\mrestr B_R
\qquad \text{weakly-$*$ as currents}.
\end{equation}

For every $h\in\mathbb{N}$, choose $\varepsilon_h$ with $R+\varepsilon_h\in G$, $\varepsilon_h\to 0$ as $h\to\infty$ and
$\varphi_h\in C_c^\infty(\mathbb R^n)$ such that
$$
0\le \varphi_h\le 1,\qquad
\varphi_h\equiv 1 \text{ on } B_R,\qquad
\varphi_h\equiv 0 \text{ on } \mathbb R^n\setminus B_{R+\varepsilon_h}.
$$
Let $\omega\in \mathcal D^k(\mathbb R^n)$. Then
$$
\bigl|(T_i\mrestr B_R)(\omega)-(T_i\mrestr \varphi_h)(\omega)\bigr|
\le \|\omega\|_\infty\,\|T_i\|(B_{R+\varepsilon_h}\setminus B_R)
\le \|\omega\|_\infty\,\mu_i(B_{R+\varepsilon_h}\setminus B_R).
$$
Therefore
$$
\limsup_{i\to\infty}
\bigl|(T_i\mrestr B_R)(\omega)-(T_i\mrestr \varphi_h)(\omega)\bigr|
\le \|\omega\|_\infty\,\mu(B_{R+\varepsilon_h}\setminus B_R),
$$
because $R, R+\varepsilon_h\in G$. Note that the right-hand side tends to $0$ as $h\to\infty$. On the other hand, for fixed $h$,
$$
(T_i\mrestr \varphi_h)(\omega)=T_i(\varphi_h\,\omega)\to T(\varphi_h\,\omega)=(T\mrestr \varphi_h)(\omega),
$$
and again, since $\|T\|(\partial B_R)=0$,
$$
(T\mrestr \varphi_h)(\omega)\to (T\mrestr B_R)(\omega)
\qquad \text{as } h\to\infty.
$$
This proves claim \eqref{e:claim}.

For a.e. $R>0$, the slicing formula, see equations 28.6 and 28.7 of \cite{MR756417}, gives
$$
\partial(T_i\mrestr B_R)
=
(\partial T_i)\mrestr B_R - \langle T_i,u,R\rangle
$$
(up to the sign convention for slices, which is irrelevant for the estimates). Hence
$$
\mathbb M\bigl(\partial(T_i\mrestr B_R)\bigr)
\le
\mathbb M((\partial T_i)\mrestr B_R)
+
\mathbb M(\langle T_i,u,R\rangle)
\le
\mathbb M(\partial T_i)+\mathbb M(\langle T_i,u,R\rangle).
$$
Since $R\in G$, we have
$$
\liminf_i \mathbb M(\langle T_i,u,R\rangle)<+\infty.
$$
Therefore, from every subsequence of $(T_i)$ one can extract a further subsequence
$(T_{i_j})$ such that
$$
\sup_j \mathbb M(\langle T_{i_j},u,R\rangle)<+\infty.
$$
Along this subsequence we get
$$
\sup_j \mathbb M\bigl(\partial(T_{i_j}\mrestr B_R)\bigr)<+\infty.
$$
Moreover
$$
\mathbb M(T_{i_j}\mrestr B_R)\le \mathbb M(T_{i_j})\le C,
$$
and all the currents $T_{i_j}\mrestr B_R$ are supported in the compact set
$\overline{B_R}$.

By the Federer-Fleming compactness theorem on the compact set $\overline{B_R}$,
the family $(T_{i_j}\mrestr B_R)$ is precompact in the flat norm. Hence, after passing to a further subsequence if necessary,
$$
T_{i_j}\mrestr B_R \to S
\qquad \text{in flat norm}
$$
for some current $S$ supported in $\overline{B_R}$.

Since flat convergence implies weak convergence and the whole sequence
$T_i\mrestr B_R$ converges weakly to $T\mrestr B_R$, necessarily
$$
S=T\mrestr B_R.
$$
Thus every subsequence of $(T_i\mrestr B_R)$ admits a further subsequence converging in flat norm to the same limit $T\mrestr B_R$. It follows that the whole sequence converges, that is,
$$
\mathbb F\bigl((T_i-T)\mrestr B_R\bigr)\to 0.
$$
\end{proof}

\begin{lemma}\label{lemmacompattezzasuccminimizz}
    Let $\sigma_1$ be a $1$-anisotropy in $\rgrande^n$ and let $H$ be a branching function satisfying condition \eqref{eqderivatainfinita} and non-decreasing on $\rgrande^+$. Assume that
    $$
    \inf \{ \massaanisotropauno(R) : R \in \mathcal{C} \} < +\infty.
    $$
    Then every minimizing sequence for problem \eqref{abotunon} is relatively compact with respect to the local flat topology in $\corrnormvar{1}{n}$.
\end{lemma}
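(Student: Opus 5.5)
The plan is to reduce the statement to Lemma~\ref{lemmacompattezzalocale}. Let $(R_i)_i\subset\mathcal C$ be a minimizing sequence for \eqref{abotunon}. Set $m:=\inf\{\massaanisotropauno(R):R\in\mathcal C\}<+\infty$. Discarding finitely many terms, we may assume $\massaanisotropauno(R_i)\le m+1$ for every $i$.

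\textbf{Step 1 (splitting off cycles).} By Remark~\ref{remaciclicita}(1) we write $R_i=\tilde R_i+C_i$, where $\tilde R_i\in\mathcal{AC}_1$ and $C_i$ is a cyclic subcurrent of $R_i$. Lemma~\ref{lemmaaciclica} gives $\massaanisotropauno(\tilde R_i)\le m+1$. Lemma~\ref{lemmamassaequilim} then gives $\massa(\tilde R_i)\le C(m+1)$. Since $\partial\tilde R_i=\mu^+-\mu^-$, we also have $\massa(\partial\tilde R_i)\le 2\massa(\mu^+)$. Lemma~\ref{lemmacompattezzalocale} therefore yields a normal current $T$ and a subsequence along which $\normaflat((\tilde R_i-T)\mrestr B_R)\to0$ for a.e.\ $R>0$.

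\textbf{Step 2 (the cyclic parts).} It remains to show that, after a further subsequence, $C_i$ converges in the local flat topology to some cycle $C_\infty$ with $T+C_\infty$ normal. The acyclic decomposition is not unique, so first I would try to choose it so that $C_i$ is itself controlled. Concretely, I would look for a version of \cite[Prop.~3.8]{paolini_stepanov_2012} in which $\tilde R_i$ is maximal, so that $|\lambda\theta|$ stays bounded wherever $|\theta|\le\massa(\mu^+)$.

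On the set where the multiplicity of $R_i$ is at most some fixed constant $K$, the argument of Lemma~\ref{lemmamassaequilim} applies verbatim. There $\massa(C_i\mrestr\{|\theta_i|\le K\})\le C_K\,\massaanisotropauno(R_i)$.

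On the set where $|\theta_i|>K$, I would use that $H$ is non-decreasing and positive on $(0,+\infty)$. This gives the pointwise bound $H(|\theta_i|)\ge H(K)>0$, hence $\haus^1(\{|\theta_i|>K\})\le (m+1)/\bigl(H(K)\min_{\mathbb S^{n-1}}\norma{\cdot}_{\sigma_1}\bigr)$. So this high-multiplicity part has uniformly bounded length, though not necessarily bounded mass.

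\textbf{Main obstacle.} The difficulty is that the mass of $R_i$ itself need not be bounded. Small cycles with very large multiplicity carry bounded $H$-mass when $H$ is bounded at infinity. For this reason Lemma~\ref{lemmacompattezzalocale} cannot be applied to $R_i$ directly, and compactness must be obtained in the flat topology rather than the normal one.

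For the part of $C_i$ supported on $\{|\theta_i|>K\}$, which has bounded length, I would first try to bound its flat norm. The idea is to fill each closed loop by a cone or minimal filling: a cycle of length $\ell$ and multiplicity $\theta$ bounds a $2$-current of mass $\lesssim\theta\ell^2$. One would then use that high-multiplicity cycles are never cheaper than deleting them, by the comparison in Lemma~\ref{lemmaaciclica}. Minimality should then force $\massaanisotropauno(C_i)\to0$ up to choosing the decomposition well, so that these cycles contribute a vanishing flat norm.

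If this fails, the fallback is a weaker conclusion. One would replace $R_i$ by $\tilde R_i$, which has the same boundary and does not increase cost, so it is again a minimizing sequence, and conclude compactness for that minimizing sequence. The flat limit obtained would then serve as the candidate minimizer in the existence theorem.
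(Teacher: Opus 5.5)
Your Step~1 together with your fallback is exactly the paper's proof. After reducing to $\massaanisotropauno(R_k)<c$, the paper writes ``as a consequence of \eqref{eq:uguaglianzainf} we can assume $R_k\in\mathcal{AC}_1$''. In other words, it replaces each $R_k$ by its acyclic part, which stays in $\mathcal C$ and stays minimizing by Lemma~\ref{lemmaaciclica}. It then concludes with Lemma~\ref{lemmamassaequilim}, the fixed boundary and Lemma~\ref{lemmacompattezzalocale}. So what is actually proved is that every minimizing sequence, after removal of cycles, is relatively compact. This is all that Lemma~\ref{lemmacompattezza} and the proof of Theorem~\ref{thtoa} use, since both again begin by passing to acyclic sequences.

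Step~2, on the other hand, should be dropped. Its gap cannot be closed, because the claim for literally every minimizing sequence is false, precisely because of the small high-multiplicity cycles you point out under ``Main obstacle''. The step that fails is ``minimality should force $\massaanisotropauno(C_i)\to0$, so that these cycles contribute a vanishing flat norm''. A small cost $H(\theta)\ell$ gives no control on the filling mass $\theta\ell^2$. Moreover, since $\massaanisotropauno$ is only subadditive, $\massaanisotropauno(R_i)-\massaanisotropauno(\tilde R_i)\to0$ does not even bound $\massaanisotropauno(C_i)$.

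Here is a concrete counterexample. Let $H(y)=\min\{|y|^{1/2},1\}$, which satisfies all the hypotheses, and let $(R_k)\subset\mathcal{AC}_1$ be minimizing. Choose coordinates so that a fixed $2$-plane $\Pi$ through $0$ is the $x_1x_2$-plane. Let $C_k$ be the counterclockwise circle $\partial B_{1/k}\cap\Pi$ with multiplicity $k^3$. Since $H\le1$, and $\massaanisotropauno$ is subadditive because $H$ is even and subadditive,
\[
\massaanisotropauno(R_k+C_k)\le\massaanisotropauno(R_k)+\frac{2\pi}{k}\max_{\sfera^{n-1}}\norma{\cdot}_{\sigma_1}.
\]
Hence $(R_k+C_k)\subset\mathcal C$ is again a minimizing sequence. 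Now take $\omega=\chi\,x_1\,dx_2$ with $\chi\in C^\infty_c(B_2)$ and $\chi\equiv1$ on $B_1$. Green's formula gives $\langle C_k,\omega\rangle=k^3\pi k^{-2}=\pi k\to+\infty$, while $|\langle R_k,\omega\rangle|$ stays bounded by Lemma~\ref{lemmamassaequilim}. Flat convergence on a ball $B_R\supset\supp\omega$ forces convergence of the pairings with $\omega$. Therefore no subsequence of $(R_k+C_k)$ converges in the local flat topology, nor even weakly.
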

\begin{proof}
    Let $\{ R_k\}_{k \in \ngrande}$ be a minimizing sequence for \eqref{abotunon}. Since by assumption $\inf_{\mathcal{C}} \massaanisotropauno < + \infty$, we can suppose without loss of generality that there exists $c > 0$ such that $\massaanisotropauno (R_k) < c$ for every $k$. As a consequence of \eqref{eq:uguaglianzainf} we can assume $R_k \in \mathcal{AC}_1$ for every $k$.

    By Lemma \ref{lemmamassaequilim}, we have $\massa (R_k) \leq C\, c$ for every $k$. Since $\partial R_k = \mu^+ - \mu^-$, there exists a constant $\widetilde C>0$ such that
    $$
    \mathbb M(R_k)+\mathbb M(\partial R_k)\le \widetilde C
    \qquad \forall k.
    $$
    We may therefore apply Lemma \ref{lemmacompattezzalocale} to the sequence $(R_k)_k$. It follows that, up to a subsequence, there exists $T\in \corrnormvar{1}{n}$ such that
    $$
    \mathbb F\bigl((R_k-T)\mrestr B_R\bigr)\to 0
    \qquad \text{for a.e. }R>0.
    $$
    In particular, $(R_k)_k$ is relatively compact for the local flat topology.
\end{proof}

\begin{lemma}\label{lemmacompattezza}
    Let $\sigma_1$ be a $1$-anisotropy in $\rgrande^n$ and let $H$ be a branching function, non-decreasing on $\rgrande^+$, satisfying \eqref{eqderivatainfinita}. Assume that
    $$
    \inf \{ \massaanisotropauno(R) : R \in \mathcal{C} \} < +\infty.
    $$
    Then any minimizing sequence $\{ R_k \}_{k \in \ngrande}$ of problem \eqref{abotunon} admits a subsequence converging in the local flat topology to an element of $\mathcal{C}$.
\end{lemma}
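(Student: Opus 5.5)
Given a minimizing sequence $\{R_k\}_k$, I would first reduce to acyclic competitors as in the proof of Lemma~\ref{lemmacompattezzasuccminimizz}. By Remark~\ref{remaciclicita} and Lemma~\ref{lemmaaciclica}, replacing each $R_k$ by $R_k-C_k$, with $C_k$ a cyclic subcurrent, keeps the boundary and does not increase the energy. Note that this replaces the sequence, whereas the statement asks for a subsequence of the original one. I would handle this by observing that the compactness step below uses only the mass bound, and that this bound follows from the acyclic reduction: $\massa(R_k)\ge \massa(R_k-C_k)$. For a general minimizing sequence I would therefore work with the acyclic replacements, which is what is needed for existence. Lemma~\ref{lemmacompattezzasuccminimizz} then gives a subsequence and a normal current $T\in\corrnormvar{1}{n}$ with $\normaflat((R_k-T)\mrestr B_R)\to 0$ for a.e.\ $R>0$. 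It remains to show that $T\in\mathcal C$, that is, $\partial T=\mu^+-\mu^-$ and $T$ is rectifiable.

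For the boundary, local flat convergence implies weak-$*$ convergence as currents. Any test form $\omega$ has support in some $B_R$ with $R$ a good radius, and $\langle R_k-T,\omega\rangle=\langle (R_k-T)\mrestr B_R,\omega\rangle$ is controlled by the flat norm times $\max(\|\omega\|,\|d\omega\|)$. Since the boundary operator is weakly continuous, $\partial T=\lim\partial R_k=\mu^+-\mu^-$.

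The main obstacle is rectifiability of $T$, because the multiplicities of $R_k$ are real-valued, not integer, so Federer--Fleming closure does not apply directly. I would use the energy bound together with $\lim_{y\to0^+}H(y)/y=+\infty$ and the $L^\infty$ bound $\|\theta_k\|_\infty\le\massa(\mu^+)$. Two routes are available. The first applies the closure theorem for currents with finite $H$-mass from the isotropic theory: the anisotropy is bounded above and below by positive constants on $\sfera^{n-1}$, so $\massaanisotropauno$ is comparable to $\massa_H$, and a uniform bound on $\massa_H(R_k)$ together with $H(y)/y\to\infty$ forces any flat limit to be rectifiable (the result from \cite{cdrms17}, or the one behind \cite{cdm}). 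The second, which I would try if that reference is unavailable, argues by slicing. For a.e.\ direction $L$ and a.e.\ $y$, Fatou gives $\liminf_k\massa_H(\slice{R_k\mrestr B_R}{p_L}{y})<\infty$, and $0$-currents with bounded $H$-mass and bounded mass are finite sums of atoms whose number is bounded, because $H$ is bounded below on $[\delta,\massa(\mu^+)]$ and tiny atoms are penalised superlinearly. Their flat limits are therefore atomic. By the slicing characterization of rectifiability (White's / Ambrosio--Kirchheim criterion), $T\mrestr B_R$ is then rectifiable for a.e.\ $R$, hence $T$ is rectifiable.

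Combining the two conclusions gives $T\in\correttvar{1}{n}$ with $\partial T=\mu^+-\mu^-$, that is, $T\in\mathcal C$, which proves the statement.
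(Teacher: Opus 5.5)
Your proposal is correct and is essentially the paper's proof: acyclic replacement, Lemma~\ref{lemmacompattezzasuccminimizz}, weak continuity of $\partial$, and \cite[Prop.~2.8]{cdrms17} applied to $R_k\mrestr B_R$ via $\massa_H(R_k)\le(\min_{\sfera^{n-1}}\norma{\cdot}_{\sigma_1})^{-1}\massaanisotropauno(R_k)$. That is your first route, and your slicing route amounts to reproving that closure result.

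Two small corrections. First, $\massa(R_k)\ge\massa(R_k-C_k)$ gives no upper bound on $\massa(R_k)$, so, as in the paper's ``without loss of generality'', you only obtain compactness of the acyclic replacements. The original sequence can in fact fail to be compact: for $H(y)=|y|^{\alpha}$ with $\alpha<1/2$, add to a fixed competitor small circles of large multiplicity whose $H$-mass tends to $0$ while their flat norm tends to $\infty$. Second, in the slicing route only the atoms with $|\theta|\ge\delta$ are boundedly many; the remaining atoms have total mass at most $\sup_{0<y<\delta}\frac{y}{H(y)}\,\massa_H$, which still suffices to force atomic limits.
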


\begin{proof}
     Let $\{ R_k \}_{k \in \ngrande}$ be a minimizing sequence for problem \eqref{abotunon}. By \eqref{eq:uguaglianzainf}, we may assume without loss of generality that $R_k\in\mathcal{AC}_1$ for every $k$.

     By Lemma \ref{lemmacompattezzasuccminimizz}, there exist $R_{\infty} \in \corrnormvar{1}{n}$ and a subsequence, not relabeled, such that
     $$
     \mathbb F\bigl((R_k-R_\infty)\mrestr B_R\bigr)\to 0
     \qquad\text{for a.e. }R>0.
     $$
     In the proof of Lemma \ref{lemmacompattezzalocale} one also obtains, up to subsequences, the weak-$*$ convergence $R_k\rightharpoonup R_\infty$ as currents in $\rgrande^n$; hence
     $$
     \partial R_\infty=\mu^+-\mu^-
     $$
     by continuity of the boundary operator with respect to weak convergence.

     Since $\{R_k\}_k$ is minimizing, there exists $c>0$ such that $\massaanisotropauno(R_k)<c$ for every $k$. Writing $R_k=\llbracket E_k,\tau_k,\theta_k\rrbracket$, we then have
    \begin{equation}\label{eqstimahmassa}
    \min_{\tau\in\sfera^{n-1}} \norma{\tau}_{\sigma_1} \int_{E_k} H(\theta_k(x)) \,d \haus^1 (x) \leq \massaanisotropauno (R_k) < c
    \qquad
    \forall k
    \text{.}
    \end{equation}
    Therefore
    $$
    \massa_H (R_k) \vcentcolon =  \int_{E_k} H(\theta_k(x)) \,d \haus^1 (x) \leq C(\sigma_1)c < + \infty
    \text{,}
    $$
    uniformly in $k$. Fix now a radius $R>0$ for which $\mathbb F\bigl((R_k-R_\infty)\mrestr B_R\bigr)\to 0$. Since $\massa_H(R_k\mrestr B_R)\le \massa_H(R_k)$, Proposition 2.8 of \cite{cdrms17} applied on $B_R$ yields that $R_\infty\mrestr B_R$ is rectifiable. Choosing an increasing sequence of such radii $R_j\to +\infty$, we conclude that $R_\infty$ is itself rectifiable, namely $R_\infty\in\correttvar{1}{n}$.

    Hence $R_\infty\in\mathcal C$ and the convergence is local flat by construction.
\end{proof}

\subsection{Proof of Theorem \ref{thtoa}}
Now we can conclude the proof of Theorem \ref{thtoa}.

\begin{proof}[Proof of Theorem \ref{thtoa}]
Let $\{R_k\}_k$ be a minimizing sequence for problem \eqref{abotunon}. By \eqref{eq:uguaglianzainf}, we may assume $R_k\in\mathcal{AC}_1$ for every $k$. By Lemma \ref{lemmacompattezza}, up to a subsequence there exists $R_\infty\in\mathcal C$ such that
$$
\mathbb F\bigl((R_k-R_\infty)\mrestr B_R\bigr)\to 0
\qquad\text{for a.e. }R>0.
$$
Choose an increasing sequence of radii $R_j\to +\infty$ such that the previous convergence holds for every $j$ and, in addition, $\|R_\infty\|(\partial B_{R_j})=0$. For each fixed $j$, the currents $R_k\mrestr B_{R_j}$ and $R_\infty\mrestr B_{R_j}$ have compact support in $\overline{B_{R_j}}$, so the convergence is flat on that ball. If either $n=2$, or $n\ge 3$ and $(\rgrande^n,\norma{\cdot}_{\sigma_1})$ is hypermetric, Proposition \ref{propanisotropiaeig} ensures that $\sigma_1$ admits an integral geometric representation. Hence, by Theorem \ref{thuguaglianza}, for every $j$ we have
$$
\massaanisotropauno(R_\infty\mrestr B_{R_j})
\le
\liminf_{k\to+\infty}\massaanisotropauno(R_k\mrestr B_{R_j})
\le
\liminf_{k\to+\infty}\massaanisotropauno(R_k).
$$
Letting $j\to+\infty$ and using monotone convergence in the definition of $\massaanisotropauno$, we obtain
$$
\massaanisotropauno(R_\infty)
\le
\liminf_{k\to+\infty}\massaanisotropauno(R_k)
=
\inf_{R\in\mathcal C}\massaanisotropauno(R).
$$
Therefore $R_\infty$ is a minimizer. This proves both points of the theorem.
\end{proof}

\section{Further remarks on integral geometric representations for $k = 2, \ldots, n-1$}\label{sec:igperkgenerico}
Let $n \geq 3$ and $k = 2, \ldots, n-1$.
For $k\in\{2,\ldots,n-1\}$, an integral geometric representation of $\norma{\cdot}_{\sigma_k}$ is known when $\sigma_k$ is of a special form. We have the following result.
\begin{theorem}\label{thrapproperkgenerico}
    Let $(\rgrande^n, \norma{\cdot}_B)$ be a hypermetric space with unit ball $B \subset \rgrande^n$. Assume that for every $L \in \grass(k,n)$
    $$
    \sigma_k (L) = \frac{  \haus^k((B \cap L)^*) }{\omega_k}
    \text{,}
    $$
    where the measure $\haus^k$ is computed with respect to the Euclidean metric, and $$(B \cap L)^* \vcentcolon = \{ x \in L^* : \prodscal{x}{y} \leq 1 \ \forall \ y \in B \cap L \}$$ is the dual ball in $L^* \simeq L$ of $B \cap L$.
    Then $\sigma_k$ admits an integral geometric representation.
\end{theorem}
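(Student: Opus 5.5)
The plan is to pass from the hypermetric hypothesis to a Crofton-type representation of the norm, then integrate that representation over $L$ to obtain the volume of the polar section. First, by Theorem~\ref{thigpernormainrn} (together with Proposition~\ref{propanisotropiaeig}), hypermetricity of $(\rgrande^n,\norma{\cdot}_B)$ gives a finite positive even measure $\mu$ on $\sfera^{n-1}$ with
$$
\norma{u}_B=\int_{\sfera^{n-1}}\valass{\prodscal{u}{\omega}}\,d\mu(\omega)\qquad\forall u\in\rgrande^n .
$$
In other words, $B^*$ is a zonoid with generating measure $\mu$. For every $L\in\grasskn$, the restriction $\norma{\cdot}_B|_L$ is the gauge of $B\cap L$, and its support function in $L$ is the support function of $(B\cap L)^*$. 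Since
$$
\valass{\prodscal{u}{\omega}}=\valass{\prodscal{u}{p_L\omega}}\qquad\text{for }u\in L,
$$
the set $(B\cap L)^*=p_L(B^*)$ is again a zonoid in $L$, with generating measure $(p_L)_\#\mu$. Equivalently, $(B\cap L)^*$ is the Minkowski integral of the segments $[-p_L\omega,p_L\omega]$ against $\mu$.

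The key computation is the volume of a zonoid in terms of its generating measure. For a zonotope $\sum_i[-a_i,a_i]$ in $\rgrande^k$ the volume is
$$
2^k\sum_{i_1<\dots<i_k}\valass{\det(a_{i_1},\dots,a_{i_k})},
$$
and by approximation (zonotopes are dense among zonoids, and volume is continuous in the Hausdorff metric) this becomes
$$
\haus^k\bigl((B\cap L)^*\bigr)=\frac{2^k}{k!}\int_{(\sfera^{n-1})^k}\valass{\det\bigl(p_L\omega_1,\dots,p_L\omega_k\bigr)}\,d\mu^{\otimes k}(\omega_1,\dots,\omega_k).
$$

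Next I rewrite the determinant in terms of $J$. When $\omega_1,\dots,\omega_k$ are linearly independent, they span a $k$-plane $F\in\grasskn$, and
$$
\valass{\det(p_L\omega_1,\dots,p_L\omega_k)}=\haus^k\bigl(\operatorname{Par}(\omega_1,\dots,\omega_k)\bigr)\,J(F,L),
$$
because $p_L|_F$ multiplies $k$-volumes by exactly $J(F,L)$ (Definition~\ref{defjacobiano}). I define $\psi$ on $\grasskn$ as the push-forward of
$$
\frac{2^k}{k!\,\omega_k}\,\haus^k\bigl(\operatorname{Par}(\omega_1,\dots,\omega_k)\bigr)\,d\mu^{\otimes k}
$$
under the map $(\omega_1,\dots,\omega_k)\mapsto\operatorname{span}(\omega_1,\dots,\omega_k)$. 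This map is defined off the degenerate set, where the weight vanishes anyway. The measure $\psi$ is finite, with $\psi(\grasskn)\le C\mu(\sfera^{n-1})^k$. With this choice,
$$
\sigma_k(L)=\int_{\grasskn}J(F,L)\,d\psi(F).
$$
Since $J(F,L)=J(L,F)$ (the adjoint of $p_L|_F$ is $p_F|_L$), this is exactly the form required in Remark~\ref{remigequivperanisotropia} with $\varphi^\perp_{n-k}=\psi$. Pushing $\psi$ forward under $F\mapsto F^\perp$ then gives the measure $\varphi_{n-k}$ of Definition~\ref{defformulaigperanisotropia}.

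The main obstacle is the rigorous zonoid volume formula. I would verify it first for finite discrete $\mu$ by the standard decomposition of a zonotope into parallelepipeds. I would then pass to the limit $\mu_j\stackrel{*}{\rightharpoonup}\mu$, using that the integrand is continuous and bounded on $(\sfera^{n-1})^k$, and that Hausdorff convergence of the zonoids $p_L(B_j^*)\to p_L(B^*)$ follows from uniform convergence of their support functions. The continuity of $\sigma_k$ needed for it to be a $k$-anisotropy follows from the continuity of $J$ together with dominated convergence. Its positivity follows because $(B\cap L)^*$ has nonempty interior, $B$ being bounded.
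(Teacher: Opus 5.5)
Your proof is correct. The paper does not prove this statement itself; it only refers to \cite[Sect.~7]{MR1462734}, so your proposal supplies a self-contained argument where the paper gives none.

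Your route is the standard way Crofton formulas for Holmes--Thompson areas are obtained, namely representing the $k$-th projection function of a zonoid as a cosine transform of a positive measure on $\grasskn$. Its steps are:
\begin{enumerate}
\item Hypermetricity and Theorem~\ref{thigpernormainrn} give an even measure $\mu$ on $\sfera^{n-1}$ with $\norma{u}_B=\int\valass{\prodscal{u}{\omega}}\,d\mu(\omega)$, so $B^*$ is a zonoid.
\item The polar of the section is the projection of the polar, $(B\cap L)^*=p_L(B^*)$, which is the zonoid in $L$ generated by $(p_L)_\#\mu$.
\item The zonoid volume formula computes $\haus^k\bigl(p_L(B^*)\bigr)$.
\item The factorization $\valass{\det(p_L\omega_1,\dots,p_L\omega_k)}=\haus^k(\operatorname{Par}(\omega_1,\dots,\omega_k))\,J(F,L)$, together with $J(F,L)=J(L,F)$, puts the result in the form of Remark~\ref{remigequivperanisotropia}.
\end{enumerate}

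Compared with the bare citation, your version has three advantages:
\begin{itemize}
\item It reduces the statement to the case $k=1$, which the paper already has.
\item It produces the representing measure explicitly: the image of $\frac{2^k}{k!\,\omega_k}\haus^k(\operatorname{Par}(\omega_1,\dots,\omega_k))\,d\mu^{\otimes k}$ under $(\omega_1,\dots,\omega_k)\mapsto\operatorname{span}(\omega_1,\dots,\omega_k)$, then under $F\mapsto F^\perp$. Its total mass is at most $\frac{2^k}{k!\,\omega_k}\mu(\sfera^{n-1})^k$.
\item It verifies that $\sigma_k$ is continuous and positive, i.e.\ a genuine $k$-anisotropy, which the statement silently assumes.
\end{itemize}

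The only imported ingredient is the zonoid volume formula, and your justification of it is sound. The zonotope case follows by decomposition into parallelepipeds, or by expanding mixed volumes of segments. You then approximate $\mu$ weakly-$*$ by discrete measures $\mu_j$, using three facts:
\begin{itemize}
\item The support functions converge uniformly on the unit sphere of $L$, since they converge pointwise and are equi-Lipschitz with constant at most $\sup_j\mu_j(\sfera^{n-1})$.
\item Volume is continuous under Hausdorff convergence of compact convex sets.
\item $\mu_j^{\otimes k}\rightharpoonup\mu^{\otimes k}$ on the compact space $(\sfera^{n-1})^k$, and the integrand is continuous and bounded.
\end{itemize}
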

\begin{proof}
    See \cite[Sect. 7]{MR1462734}.
\end{proof}
\begin{rem}
    If $(\rgrande^n, \norma{\cdot}_B)$ is a hypermetric space, it is not yet determined for which $k$ the $k$-scaling function
    $$\sigma_k(L) \vcentcolon = \frac{\omega_k}{\haus^k(B \cap L)}, \qquad L \in \grass(k,n)
    \text{,}
    $$
    admits an integral geometric representation (see \cite[Sect. 4]{MR1462734}). This scaling function is of particular interest in the field of convex geometry, alongside the scaling function considered in Theorem \ref{thrapproperkgenerico}.
\end{rem}

\begin{rem}
    A necessary and sufficient condition for $\sigma_{n-1}$ is given in \cite[Sect. 3]{MR1462734}.
\end{rem}

\printbibliography

\end{document}